\renewcommand{\geq}{\geqslant}
\renewcommand{\leq}{\leqslant}
\newtheorem{thm}{Theorem}
\newtheorem{cor}[thm]{Corollary}
\newtheorem{prop}[thm]{Proposition}
\newtheorem{lem}[thm]{Lemma}
\definecolor{darkgreen}{rgb}{0,0.4,0}
\definecolor{MyDarkBlue}{rgb}{0,0.08,0.50}
\definecolor{BrickRed}{rgb}{0.65,0.08,0}
\title[Walks reaching against all odds the other side of the~quarter~plane]{Random walks reaching against all odds the other side of the~quarter~plane}
\author{Johan S.H. van Leeuwaarden}
\address{Department of Mathematics and Computer Science, Eindhoven University of Technology,
P.O.\ Box 513, 5600 MB  Eindhoven, The Netherlands}
\email{j.s.h.v.leeuwaarden@tue.nl}
\author{Kilian Raschel}
\address{CNRS and Laboratoire de Math\'ematiques et Physique Th\'eorique, Universit\'e de Tours, Parc de Grandmont, 37200 Tours, France}
\email{Kilian.Raschel@lmpt.univ-tours.fr}
\keywords{Random walks in the quarter plane; Hitting probability of the boundaries; Nucleosome shifting; Voter model; Asymmetric exclusion process}
\subjclass{Primary 60G50; Secondary 82C22; 30E20}
 \date{\today}
\begin{document}

\begin{abstract}
For a homogeneous random walk in the quarter plane with nearest-neighbor transitions, starting from some state $(i_0,j_0)$, we study the event that  the walk reaches the vertical axis, before reaching the horizontal axis. We derive a certain integral representation for the probability of this event, and an asymptotic expression for the case when $i_0$ becomes large, a situation in which the event becomes highly unlikely.  The integral representation follows from the solution of a boundary value problem and is involves a conformal gluing function. The asymptotic expression follows from the asymptotic evaluation of this integral. Our results find applications in a model for nucleosome shifting, the voter model and the asymmetric exclusion process.
\end{abstract}

\maketitle

\section{Introduction}
Consider homogeneous random walks in the quarter plane with nearest-neighbor transitions. For such random walks, starting from some state $(i_0,j_0)$, we study the event of reaching the vertical axis, before reaching the horizontal axis. We derive an integral representation for the probability of this event, and  an asymptotic expression for the case when $i_0$ becomes large, a situation in which the event becomes highly unlikely. We use the classical method of solving for the generating function via functional equations and boundary value problems.

Our primary motivation is the work of Opheusden and Redig \cite{OpRe} on the following one-dimensional particle system. Consider three particles in $\mathbb{Z}$ and let $\eta_\ell(n)$ denote the position of particle $\ell$ after $n$ steps, with initial positions $\eta_1(0)<\eta_2(0)<\eta_3(0)$. The particles each get a weight and are then equipped with the following dynamics. At each time step, one of the particles is selected with probabilities proportional to their weights. The chosen particle is then moved to either the left or the right, with equal probability. Denote by $X(n)=\eta_2(n)-\eta_1(n)$ and $Y(n)=\eta_3(n)-\eta_2(n)$ the pairwise distances. The discrete-time Markov chain $(X(n),Y(n))_{n\in \mathbb{Z}_+}$, with $\mathbb{Z}_+=\{0,1,\ldots\}$, then clearly is a random walk in the quarter plane $\mathbb{Z}_+^2$. In particular, with particle two having weight $\nu$ and particles one and three weights $\lambda$, one obtains the walk in Figure \ref{Three_motivations}(a). For this walk Opheusden and Redig \cite{OpRe} studied the event of the Markov chain, starting from $(X(0),Y(0))=(14,1)$, reaching the vertical axis, before the horizontal axis. This event plays an important role in studying a nucleosome shifting with respect to the DNA sequence. In \cite{OpRe} an asymptotic expression was derived; see \eqref{oph}.

Another application of this work is the one-dimensional voter model. This lattice-based interacting particle system is used to model the spread of an opinion through a static population via nearest-neighbor interactions, and finds application in modeling competing species.
It is a discrete-time process on $\{0,1\}^{\mathbb{Z}}$, where each site in $\mathbb{Z}$ is labeled either $0$ or $1$. Two adjacent sites (a pair) are called an unlike pair when the labels are $01$ or $10$. The voter model then has the following dynamics.
At each time step the model selects uniformly at random from amongst all unlike pairs. The chosen pair is flipped to either $00$ or $11$, with equal chance of each.
\begin{figure}[t]
\begin{center}
\begin{picture}(000.00,780.00)
\hspace{-100mm}\hspace{-2mm}\includegraphics{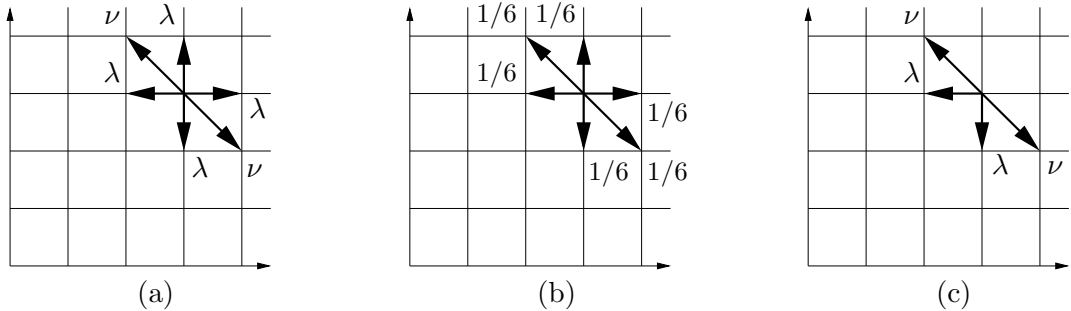}
\end{picture}
\end{center}
\vspace{-234mm}
\caption{Walks considered in \cite{OpRe}, \cite{Men01} and \cite{Godr}, respectively}
\label{Three_motivations}
\end{figure}
Since our version of the voter model lives on the infinite lattice, the ground state is the so-called Heaviside configuration
     \begin{equation*}
          \ldots 111000\ldots,
     \end{equation*}
and the initial configuration is assumed to be one with a finite number of unlike pairs.
For example,  with $N$ the number of finite blocks of zeros (or ones), a configuration looks like
\begin{equation*}
          \ldots 111
                 \overbrace{0000}^{\kappa_{1}}
                 \overbrace{1111}^{\sigma_{1}}
                 \overbrace{00}^{\kappa_{2}}
                 \overbrace{11111}^{\sigma_{2}}
          \ldots \overbrace{000000}^{\kappa_{N}}
                 \overbrace{1111}^{\sigma_{N}}
                 000
          \ldots ,
     \end{equation*}
with $\kappa_{\ell}$ (resp.\ $\sigma_{\ell}$) the size of the $\ell$th block of zeros (resp.\ ones). It is clear that $N$ is a non-increasing
function of time, since blocks will merge as time progresses. In fact, the Heaviside configuration ($N=0$) is an absorbing state.
Therefore, for the voter model, a crucial characteristic is the hitting time $\tau$ of the Heaviside configuration.
In \cite{Men01} it is shown that
$\mathbb{E}[\tau^{3/2-\epsilon}]<\infty$ and
$\mathbb{E}[\tau^{3/2+\epsilon}]=\infty$, for any
$\epsilon>0$ and any initial configuration. In proving the latter fact,
it suffices to consider the case $N=1$, because $N$ is non-increasing and hence the process always has to pass before absorption through $N=1$. Therefore, in \cite{Men01} the process
$(\kappa_{1}(n),\sigma_{1}(n))_{n\in \mathbb{Z}_+}=(X(n),Y(n))_{n\in \mathbb{Z}_+}$ is considered, which is clearly a random walk in the quarter plane
that is absorbed when it reaches
the boundary $\{(0,0)\} \cup \{ (i,0) : i\geq 1 \} \cup \{ (0,j ): j\geq 1 \}$. Define $\mathbb{Z}_+^*=\{1,2,\ldots\}$.
With
     \begin{equation*}
          p_{i,j}=\mathbb{P}\left[(X(n+1),Y(n+1))=(X(n),Y(n))+(i,j) \big|(X(n),Y(n))\in\mathbb{Z}_+^{* 2}\right],
     \end{equation*}
the dynamics of the voter model is described by  $p_{1,0}=p_{1,-1}=p_{0,-1}=p_{-1,0}=p_{-1,1}=p_{0,1}=1/6$, see Figure \ref{Three_motivations}(b). This random walk thus plays an important role in the voter model. It describes the situation in which the final two remaining groups try to impose their opinions on each other. This situation is in many cases rather persistent, particularly when both groups are of considerable size; see \cite{DuLe} for some simulation results that support this fact. We are interested in the situation in which one of the two groups forms a clear minority, and nevertheless, wins the battle with the other much larger group. It is clear that this is a large deviations event, and it is for this event that we obtain precise asymptotics.

A third application of the results in this paper is the phenomenon of spontaneous symmetry breaking in the asymmetric exclusion process.
Godr\`{e}che {\it et al.}~\cite{Godr} show that in some limiting regime, this phenomenon can be formulated as the hitting probability of the random walk with the transitions as in Figure \ref{Three_motivations}(c). Contrary to the first two applications, this random walk clearly has a negative drift.

Hence, in all three applications, we are interested in random walks reaching the vertical axis, before the horizontal axis, in situations where reaching first the horizontal axis is much more likely. In the next section we present our results for the probability of this event, for both zero-drift and negative-drift random walks. We shall also discuss the consequences for the three applications.

\section{Main results}
Denote by $(X,Y)=(X(n),Y(n))_{n\in\mathbb{Z}_+}$ a random walk in the quarter plane $\mathbb{Z}_+^2$, and let $\mathbb{P}_{(i_{0},j_{0})}[\mathscr{E}]$ be the probability of event $\mathscr{E}$ conditional on $(X(0),Y(0))=(i_{0},j_{0})$. Throughout we shall make the following assumption:
\begin{enumerate}[label=(H\arabic{*}),ref={\rm (H\arabic{*})}]
     \item \label{small_jumps} The walk is homogeneous inside of the quarter plane, with transition probabilities $\{p_{i,j}\}_{-1\leq i,j\leq 1}$ to the eight nearest neighbors.
\end{enumerate}
Denote the horizontal and vertical axes by
\begin{equation*}
\mathcal{H}=\{ (i,0) : i\geq 0 \}, \qquad \mathcal{V}=\{ (0,j ): j\geq 0 \},
\end{equation*}
and define $\mathcal{H}^*=\mathcal{H}\setminus \{(0,0)\}$ and $\mathcal{V}^*=\mathcal{V}\setminus \{(0,0)\}$. The principal object of study in this paper is the probability
     \begin{equation}
     \label{mainprob}
          \mathbb{P}_{(i_0,j_0)}[(X,Y) \textnormal{ hits } \mathcal{V} \textnormal{ before } \mathcal{H}^*],
     \end{equation}
for which we derive an exact expression, as well as an asymptotic expression for the large-deviations case $i_0\rightarrow\infty$.

In this paper we shall for the most part restrict to random walks $(X,Y)$ that, besides \ref{small_jumps}, satisfy the following assumptions:
\begin{enumerate}[label=(H\arabic{*}),ref={\rm (H\arabic{*})}]
\setcounter{enumi}{1}
     \item \label{non_degenerate} In the list $p_{1,1},p_{1,0},p_{1,-1},p_{0,-1},p_{-1,-1},p_{-1,0},p_{-1,1},p_{0,1}$,
                 there are no three~consecutive zeros;
     \item \label{non_degenerate2} $p_{1,1}+p_{-1,1}+p_{-1,-1}+p_{1,-1}<1$;
     \item \label{drift} The drifts are non-positive: $\sum_{-1\leq i,j\leq 1}i p_{i,j}\leq 0$ and $\sum_{-1\leq i,j\leq 1}j p_{i,j}\leq 0$.
\end{enumerate}

Assumption \ref{drift} guarantees that the random walk will hit one of the boundaries with probability one. With assumption \ref{small_jumps} and \ref{non_degenerate2}  we can use the general framework for random walks in the quarter plane developed by Fayolle {\it et al.}~\cite{FIM} (see Subsection \ref{App_exp_f} for more details). Assumption \ref{non_degenerate} excludes degenerate random walks, which can typically be analyzed using easier methods.

Here is our first main result.
\begin{thm}\label{main_theorem_zero}
Let $(X,Y)$ be a random walk satisfying \ref{small_jumps}--\ref{drift}. If
     \begin{equation}
     \label{drift_zero}
          \textstyle\sum_{-1\leq i,j\leq 1}i p_{i,j}=0\quad \text{and}\quad  \sum_{-1\leq i,j\leq 1}jp_{i,j}=0,
     \end{equation}
 there exists a constant $A\in(0,\infty)$ such that
     \begin{equation*}
          \mathbb{P}_{(i_0,j_0)}[(X,Y) \textnormal{ hits } \mathcal{V} \textnormal{ before } \mathcal{H}^*]\sim  A\frac{j_0}{i_0},
          \qquad i_0\to \infty.
     \end{equation*}
     \end{thm}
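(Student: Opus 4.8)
The plan is to realize the target probability as a bounded harmonic function with explicit boundary data, turn harmonicity into a functional equation for a generating function, reduce that equation to a boundary value problem solvable by a conformal gluing function, and then read off the $i_0\to\infty$ behaviour from the singularity of the resulting integral.

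\emph{Harmonic function and functional equation.} Write $h(i,j)=\mathbb{P}_{(i,j)}[(X,Y)\text{ hits }\mathcal V\text{ before }\mathcal H^*]$. By \ref{drift} the walk is absorbed on $\mathcal H\cup\mathcal V$ almost surely, so $h$ is well defined, bounded by $1$, harmonic for the walk in the interior $\mathbb Z_+^{*2}$, and carries the boundary data $h\equiv 1$ on $\mathcal V$, $h\equiv 0$ on $\mathcal H^*$; optional stopping for the bounded martingale $h(X(n\wedge\tau),Y(n\wedge\tau))$ shows it is the unique bounded solution of this system. I would then multiply the interior relation $h(i,j)=\sum_{k,l}p_{k,l}h(i+k,j+l)$ by $x^iy^j$, sum over $i,j\geq 1$, and collect the terms whose step lands on an axis. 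Using the known values on the axes, this yields the standard kernel functional equation
\begin{equation*}
K(x,y)\,G(x,y)=F(x)+\widetilde F(y)+c(x,y),
\end{equation*}
where $G(x,y)=\sum_{i,j\geq 1}h(i,j)x^iy^j$, the kernel is $K(x,y)=xy\big(\sum_{k,l}p_{k,l}x^{-k}y^{-l}-1\big)$, the term $c$ is produced explicitly by the data $h\equiv1$ on $\mathcal V$ (and thus contains a factor $\sum_{n\geq0}y^n$, i.e.\ a pole along $y=1$), and $F,\widetilde F$ are the a priori unknown one-variable generating functions carrying the sections of $h$ along the columns and rows adjacent to the axes.

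\emph{Boundary value problem and integral representation.} Here I would invoke the framework of \cite{FIM}, legitimate by \ref{small_jumps} and \ref{non_degenerate2}, with \ref{non_degenerate} excluding the degenerate cases. The kernel is a quadratic in each variable; writing $Y_\pm(x)$ and $X_\pm(y)$ for its branches and cancelling $K$ on the curve $\{K=0\}$ converts the functional equation into a relation between $F$ and $\widetilde F$ carried by that algebraic curve. Following \cite{FIM} one recasts this, through the conformal gluing function $w$ attached to the curve, as a Riemann--Carleman boundary value problem on a closed contour, whose solution is a Cauchy-type integral; transporting it back through the kernel gives the explicit integral representation for $G$, hence for $h(i_0,j_0)$ by extraction of $[x^{i_0}y^{j_0}]$. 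The zero-drift hypothesis \eqref{drift_zero} enters through $K(1,1)=0$ together with $\partial_xK(1,1)=-\sum_{k,l}kp_{k,l}=0$ and $\partial_yK(1,1)=-\sum_{k,l}lp_{k,l}=0$: the point $(1,1)$ is then a singular point of the curve, the confluence of the branch structure, and it coincides with the pole of $c$; this non-generic geometry is what forces polynomial rather than exponential decay.

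\emph{Asymptotics as $i_0\to\infty$.} For fixed $j_0$ I would study the singularity in $x$ at $x=1$ of $\sum_{i_0}h(i_0,j_0)x^{i_0}$ read off the integral representation. Because of the degeneracy at $(1,1)$ imposed by \eqref{drift_zero}, the gluing function together with the Cauchy kernel produces there a logarithmic singularity $-A\,j_0\log(1-x)+O(1)$; the transfer theorem of singularity analysis then gives $[x^{i_0}]\sim A\,j_0/i_0$. The factor $j_0$ and its linearity trace back to the boundary condition $h(i,0)=0$: the limiting coefficient $\lim_{i_0\to\infty}i_0\,h(i_0,j_0)$ is a bounded function of $j_0$ vanishing at $j_0=0$ and harmonic for the reduced transverse problem, hence linear, $A\,j_0$. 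Positivity $A>0$ follows since $h>0$ throughout the interior, and finiteness $A<\infty$ from the boundedness of $h$ and of the integral kernel away from $(1,1)$; the Brownian analogue, in which the quarter plane is mapped by the square root of the covariance to a wedge and the harmonic measure of $\mathcal V$ equals the normalized polar angle $\sim j_0/i_0$, fixes the heuristic value of $A$ and confirms $A\in(0,\infty)$.

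\emph{Main obstacle.} The delicate step is the local analysis at $(1,1)$: in the zero-drift regime this is exactly where the branches $Y_\pm(x)$ meet, where the conformal gluing function degenerates, and where the pole of the data term $c$ sits, so the singularity-analysis estimates must be carried out at a tangential, non-generic point of the kernel curve. Proving that the singularity is \emph{precisely} logarithmic with coefficient \emph{linear} in $j_0$ (ruling out a square-root branch point or a genuine pole), and securing enough uniformity to factor $j_0$ out while pinning down $A\in(0,\infty)$, is where essentially all the difficulty lies; by contrast the reduction in the first two steps is routine within the framework of \cite{FIM}.
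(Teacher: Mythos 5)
Your plan reproduces the paper's overall architecture (kernel functional equation, reduction to a boundary value problem solved via a conformal gluing function $w$, asymptotics driven by the degeneracy at the point where the two branches meet), but it is set up on a dual object and, more importantly, it defers precisely the step that carries the entire content of the theorem. The paper does not work with $\sum_{i,j\geq 1}h(i,j)x^iy^j$ over starting points; it works with the hitting-location generating function $h^{i_0,j_0}(x)$ of \eqref{exp_h}, in which $(i_0,j_0)$ enter only as the parameters of the inhomogeneous term $x^{i_0}Y_0(x)^{j_0}$. The payoff of that choice is Theorem \ref{expression_h}: the quantity of interest becomes
\begin{equation*}
h^{i_0,j_0}(1)=1-\frac{1}{\pi}\int_{x_1}^{1}t^{i_0}\,\mu_{j_0}(t)\,\frac{w'(t)}{w(t)-w(0)}\,\sqrt{-d(t)}\,\mathrm{d}t ,
\end{equation*}
so the $i_0\to\infty$ behaviour is read off by an elementary Watson/Laplace expansion of $\int_{x_1}^1 t^{i_0}(\cdots)\,\mathrm{d}t$ about $t=1$; no transfer theorem, no $\Delta$-analyticity, and no control of a logarithmic singularity in a starting-point variable are needed. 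Your route through $G(x,y)$ is not unreasonable, but it is the kernel of the reversed walk, it forces you to handle the pole of the boundary data along $y=1$, and it replaces an explicit one-dimensional integral by a genuinely harder singularity analysis.

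The genuine gap is the local analysis you yourself flag as the ``main obstacle'': showing the singularity is \emph{exactly} of the claimed type, with coefficient \emph{exactly} linear in $j_0$ and a constant in $(0,\infty)$. In the paper this is settled by two concrete inputs absent from your proposal. First, the explicit zero-drift gluing function \eqref{def_CGF_w} of \cite{FR2} satisfies $w(t)=(\alpha+o(1))(1-t)^{-\pi/\theta}$ as $t\uparrow 1$ (see \eqref{behavior_chi}), so $w'/(w-w(0))$ has a simple pole at $1$ with residue $\beta_{-1}=-\pi/\theta\neq 0$, while $\mu_{j_0}(t)\sqrt{-d(t)}$ has a simple zero there because $1$ is a double root of $d$; it is this exact cancellation that produces the order $1/i_0$ and identifies $A$ as in \eqref{eff_expression_C}. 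Without the precise power-law behaviour of $w$ at the degenerate point you can neither exclude $A=0$ nor compute $A$. Second, the linearity in $j_0$ is not obtained by any Liouville-type argument: it falls out of the closed form \eqref{def_mu}, namely $\mu_{j_0}(1)=j_0[-b(1)]^{j_0-1}/[2a(1)]^{j_0}=j_0/(2a(1))$ once $a(1)=c(1)$ under \eqref{drift_zero} is used. Your proposed justification is in fact internally inconsistent as written: a function of $j_0$ that is simultaneously \emph{bounded}, harmonic for the transverse problem, and vanishing at $j_0=0$, and which you then conclude is linear, must vanish identically, which would give $A=0$. What you need is ``at most linear growth'' together with a justification of the interchange of the limit $i_0\to\infty$ with the harmonicity in $j_0$; neither is supplied, and the paper's exact formula makes both unnecessary.
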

In Section \ref{Asymptotic} we present an explicit expression for the constant $A$ in Theorem \ref{main_theorem_zero}. For the model in Figure \ref{Three_motivations}(a) of nucleosome shifting, an inspection of Theorem \ref{main_theorem_zero} reveals
      \begin{equation}\label{oph}
          \mathbb{P}_{(i_0,j_0)}[(X,Y) \textnormal{ hits } \mathcal{V} \textnormal{ before } \mathcal{H}^*]\sim  \frac{\sqrt{1-\frac{\nu^2}{(\nu+\lambda)^2}}}{\arccos(\frac{\nu}{\nu+\lambda})}\frac{j_0}{i_0},
          \qquad i_0\to \infty.
     \end{equation}
Opheusden and Redig \cite{OpRe}    were able to derive \eqref{oph} using the following approach.
 First, define a generating function of which the probabilities in \eqref{mainprob} are the coefficients. Then show that this generating function satisfies a certain functional equation. This is the functional equation that is archetypal of random walks in the quarter plane, see \eqref{functional_equation}. The functional equation defines a characteristic curve, and by considering its tangent points one can determine the dominant singularity of the generating function. The nature of this dominant singularity then gives the asymptotic decay, in this case $O(1/i_0)$. Hence, an asymptotic estimate for \eqref{mainprob} is derived by studying a functional equation without having to solve it. The only drawback of this approach is that one cannot obtain  the constant term in the  asymptotic expression, because this would require an exact expression for the generating function and an investigation of this exact expression in the vicinity of its dominant singularity. Despite this fact, Opheusden en Redig were able to derive the constant term in \eqref{oph} by studying the continuum limit of the random walk. They conjectured that the asymptotic behavior of the continuum limit is the same as for the random walk, and provided strong numerical evidence. Here we provide the  proof of this conjecture.

For the voter model in Figure \ref{Three_motivations}(b), our Theorem \ref{main_theorem_zero} gives
         \begin{equation*}
          \mathbb{P}_{(i_0,j_0)}[(X,Y) \textnormal{ hits } \mathcal{V} \textnormal{ before } \mathcal{H}^*]\sim  \frac{3\sqrt{3}}{2\pi}\frac {j_0}{i_0},
          \qquad i_0\to \infty.
     \end{equation*}

We next present a result for random walks with a negative drift.
  \begin{thm}\label{main_theorem_non-zero}
Let $(X,Y)$ be a random walk satisfying \ref{small_jumps}--\ref{drift}. If
     \begin{equation}
     \label{drift_neg}
          \textstyle\sum_{-1\leq i,j\leq 1}i p_{i,j}\leq0 \quad \text{and} \quad  \sum_{-1\leq i,j\leq 1}jp_{i,j}<0,
     \end{equation}
     there exist constants $B(j_0)\in(0,\infty)$ and $\rho\in(0,1)$ such that
     \begin{equation*}
          \mathbb{P}_{(i_0,j_0)}[(X,Y) \textnormal{ hits } \mathcal{V} \textnormal{ before } \mathcal{H}^*]\sim  {B(j_0)}\frac{\rho^{i_0}}{i_0^{3/2}},
          \qquad i_0\to \infty.
     \end{equation*}
\end{thm}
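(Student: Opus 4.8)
The plan is to read off Theorem \ref{main_theorem_non-zero} from the exact integral representation already obtained via the boundary value problem of \cite{FIM}, by means of a one-variable singularity analysis. Write $h_{i,j}$ for the probability \eqref{mainprob} started at $(i,j)$; these numbers are harmonic in $\mathbb{Z}_+^{*2}$ for the killed walk, with boundary data $h_{0,j}=1$ and $h_{i,0}=0$ $(i\geq1)$, and they enter the functional equation \eqref{functional_equation} through the kernel $K(x,y)=xy(\sum_{i,j}p_{i,j}x^iy^j-1)$. Viewing $K$ as the quadratic $\tilde a(x)y^2+\tilde b(x)y+\tilde c(x)$ in $y$, with $\tilde a(x)=p_{-1,1}+p_{0,1}x+p_{1,1}x^2$, $\tilde b(x)=p_{-1,0}+(p_{0,0}-1)x+p_{1,0}x^2$ and $\tilde c(x)=p_{-1,-1}+p_{0,-1}x+p_{1,-1}x^2$, its discriminant $\tilde\delta(x)=\tilde b(x)^2-4\tilde a(x)\tilde c(x)$ is a degree-four polynomial whose real roots are the branch points of the two $y$-roots $Y_\pm(x)$; these branch points, together with the poles possibly introduced by the conformal gluing function, are the candidate singularities of the functions solving the problem. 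Since $j_0$ is fixed while $i_0\to\infty$, I would extract the coefficient $[y^{j_0-1}]$ from the representation of $H(x,y)=\sum_{i,j\geq1}h_{i,j}x^{i-1}y^{j-1}$, leaving a single function $\phi_{j_0}(x)=\sum_{i_0\geq1}h_{i_0,j_0}x^{i_0-1}$, so that the whole statement reduces to locating the dominant singularity of $\phi_{j_0}$ and identifying its type.

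The second step pins this singularity down and sets $\rho$ equal to the reciprocal of its location. Because $h_{i_0,j_0}\in[0,1]$ and (as claimed) decays geometrically, $\phi_{j_0}$ has radius of convergence at least $1$, so the governing singularity $x_b$ satisfies $x_b\geq1$; I would show it is the smallest branch point of $\tilde\delta$ lying strictly outside the unit disk, and set $\rho=1/x_b\in(0,1)$. That $x_b>1$ strictly is exactly where the strict negativity in \eqref{drift_neg} is used: a direct computation gives $\tilde\delta(1)=(\sum_{i,j}jp_{i,j})^2$, which is strictly positive precisely when the vertical drift is nonzero, so under \eqref{drift_neg} the point $1$ is not a branch point and $x_b$ is pushed strictly beyond it. (In the driftless regime $\tilde\delta(1)=0$, the governing singularity sits on the unit circle, and this degeneration is what changes both the exponential rate and the polynomial order in Theorem \ref{main_theorem_zero}.) Being a simple zero of $\tilde\delta$, the point $x_b$ is a square-root branch point of $Y_\pm$, giving $Y_\pm(x)=Y(x_b)\mp c\sqrt{1-x/x_b}+O(1-x/x_b)$ near $x_b$.

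The third step transfers this local behaviour to the coefficients. The explicit representation propagates the square-root singularity of $Y$ to $\phi_{j_0}$, yielding a local expansion $\phi_{j_0}(x)=\alpha_{j_0}+\beta_{j_0}\sqrt{1-x/x_b}+o(\sqrt{1-x/x_b})$ as $x\to x_b$, where $\beta_{j_0}$ is obtained by extracting $[y^{j_0-1}]$ of the square-root part of the representation, evaluated through the merged root $Y(x_b)$ and the conformal gluing function at $x_b$. The transfer theorem of singularity analysis then gives $h_{i_0,j_0}=[x^{i_0-1}]\phi_{j_0}(x)\sim -\tfrac{\beta_{j_0}}{2\sqrt{\pi}}\,\rho^{\,i_0-1}i_0^{-3/2}$, which is the assertion of Theorem \ref{main_theorem_non-zero} with $B(j_0)=-\beta_{j_0}/(2\sqrt{\pi}\,\rho)$. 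Finiteness of $B(j_0)$ is clear, and since every $h_{i_0,j_0}>0$ under \ref{non_degenerate}, Pringsheim's theorem combined with the fact that the coefficients of $\sqrt{1-x/x_b}$ are negative forces $\beta_{j_0}<0$, hence $B(j_0)\in(0,\infty)$.

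The main obstacle is to make this third step rigorous, i.e.\ to justify that $x_b$ is genuinely the dominant singularity and that the transfer theorem applies. I expect the delicate points to be: (i) showing the explicit representation has no pole in the closed disk of radius $x_b$ --- in particular that neither the zeros of $\tilde a$ nor the zeros introduced by the denominator $w(x)-w(\cdot)$ of the conformal gluing intrude before $x_b$, and that the branch points of $\tilde\delta$ smaller than $x_b$ are not singular for the particular combination forming $\phi_{j_0}$; (ii) verifying $\beta_{j_0}\neq0$, so that the square-root term does not cancel after the extraction of $[y^{j_0-1}]$; and (iii) controlling the conformal gluing function near $x_b$, where the integration contour pinches the branch point, both to compute $\beta_{j_0}$ and to obtain the uniform bounds needed to deform the contour into a $\Delta$-domain around $x_b$. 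Establishing (i)--(iii) under the sole non-degeneracy hypotheses \ref{non_degenerate}--\ref{non_degenerate2} is the technical heart; once they are in place, the coefficient asymptotics follow by the same singularity-analysis bookkeeping already used for Theorem \ref{main_theorem_zero}.
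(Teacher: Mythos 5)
Your overall scheme---a square-root singularity of a one-variable generating function, transferred to coefficient asymptotics of the form $\rho^{i_0}i_0^{-3/2}$---has the right shape, but the key quantitative step, namely the location of the dominant singularity of $\phi_{j_0}(x)=\sum_{i_0\geq 1}h_{i_0,j_0}x^{i_0-1}$ and hence the value of $\rho$, is wrong. You take $x_b$ to be the smallest root of the discriminant $\tilde\delta(x)=d(x)=b(x)^2-4a(x)c(x)$ lying strictly outside the unit disk, i.e.\ $x_3$ in the paper's notation, and set $\rho=1/x_3$. But $x_3$ (together with $x_4$ and the poles of $w$) is a singularity of the \emph{exit-position} generating function $h^{i_{0},j_{0}}(x)$ of \eqref{exp_h}, in which $x$ marks the point $(i,0)$ where the walk leaves the quadrant while $(i_0,j_0)$ is fixed. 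The function $\phi_{j_0}$ you need is a generating function in the \emph{starting} abscissa $i_0$; the functional equation it satisfies comes from the harmonicity relation $h_{i,j}=\sum_{k,l}p_{k,l}h_{i+k,j+l}$ and therefore involves the reciprocal kernel $1-\sum_{k,l}p_{k,l}x^{-k}y^{-l}$, whose branch points in $x$ are $1/x_1,\dots,1/x_4$. The correct dominant singularity is $1/x_2>1$, where $x_2\in(0,1)$ is the branch point sitting \emph{inside} the unit disk under \eqref{drift_neg}, so that $\rho=x_2$, which is what the paper obtains. Since $x_2x_3\neq 1$ in general, your value genuinely differs from the true one: for the walk $p_{1,0}=p$, $p_{-1,0}=q$, $p_{0,1}=r$, $p_{0,-1}=s$ one computes $x_2x_3=q/p$, and the trivial lower bound $\mathbb{P}_{(i_0,j_0)}[(X,Y)\textnormal{ hits }\mathcal{V}\textnormal{ before }\mathcal{H}^*]\geq q^{i_0}$ (force the first $i_0$ steps to be leftward) already exceeds $(1/x_3)^{i_0}$ whenever $p<q^2$, so $\rho=1/x_3$ cannot be right.

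The paper's proof sidesteps this issue entirely: it never forms a generating function in $i_0$, but takes the exact representation \eqref{expression_h(1)}, $h^{i_{0},j_{0}}(1)=1+\pi^{-1}\int_{x_1}^{x_2}t^{i_0}\mu_{j_0}(t)[\cdots]\sqrt{-d(t)}\,\textnormal{d}t$, and expands the integrand at the endpoint $t=x_2$, where it vanishes like $\sqrt{x_2-t}$; a Watson-lemma estimate then produces $x_2^{i_0}i_0^{-3/2}$ directly. Two further gaps: (a) your Pringsheim argument places the dominant singularity on the positive axis but does not show that the square-root coefficient $\beta_{j_0}$ is nonzero; the paper needs a genuine argument here (injectivity of the conformal gluing function, giving $w(0)\neq w(1)$, combined with the pole structure \ref{pole_x_2}--\ref{simple_contour}), and although you list this as obstacle (ii) you then rely on it for the positivity of $B(j_0)$; (b) the ``representation of $H(x,y)$'' from which you propose to extract $[y^{j_0-1}]$ is not supplied by the paper's boundary-value machinery, which is built for the exit-position functions, so you would have to set up and solve a different boundary value problem (for the reversed walk) before your step three can even begin.
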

The same result can be shown to hold for random walks with no transitions to the North, North-East and East. Introduce the assumption
\begin{enumerate}[label=(H\arabic{*}'),ref={\rm (H\arabic{*}')}]
     \setcounter{enumi}{1}
     \item \label{degenerate} $p_{-1,1}+p_{-1,0}+p_{-1,-1}+p_{0,-1}+p_{1,-1}=1$, $p_{-1,1}\neq 0$, $p_{1,-1}\neq 0$ and $p_{-1,1}+p_{1,-1}\neq 1$.
\end{enumerate}
Note that this assumption is satisfied by the random walk in Figure \ref{Three_motivations}(c), and that \ref{small_jumps} and \ref{degenerate} immediately render \ref{non_degenerate2} and \ref{drift}. We have the following result.
\begin{thm}
\label{theorem_Godreche}
Let $(X,Y)$ be a random walk satisfying \ref{small_jumps} and \ref{degenerate}. There exist constants $C(j_0)\in(0,\infty)$ and $\rho\in(0,1)$ such that
     \begin{equation*}
          \mathbb{P}_{(i_0,j_0)}[(X,Y) \textnormal{ hits } \mathcal{V} \textnormal{ before } \mathcal{H}^*]\sim  {C(j_0)}\frac{\rho^{i_0}}{i_0^{3/2}},
          \qquad i_0\to \infty.
     \end{equation*}
\end{thm}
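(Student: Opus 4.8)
The plan is to run the same machinery as in the proof of Theorem~\ref{main_theorem_non-zero} — functional equation, reduction to a boundary value problem, resolution through a conformal gluing function, and singularity analysis of the resulting integral — the essential new point being that \ref{degenerate} forces the kernel to degenerate in a way that must be understood precisely. Writing $h_{i,j}=\mathbb{P}_{(i,j)}[(X,Y)\text{ hits }\mathcal{V}\text{ before }\mathcal{H}^*]$, these numbers are harmonic in the interior, $h_{i,j}=\sum_{-1\leq k,l\leq 1}p_{k,l}h_{i+k,j+l}$ for $i,j\geq 1$, with $h_{0,j}=1$ $(j\geq 0)$ and $h_{i,0}=0$ $(i\geq 1)$. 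First I would collect the $h_{i,j}$ into generating functions and, as usual, turn harmonicity and the boundary data into the functional equation whose kernel, under \ref{degenerate}, equals
\begin{equation*}
K(x,y)=p_{-1,1}\,y^{2}+p_{-1,0}\,y+p_{-1,-1}+p_{0,-1}\,x+p_{1,-1}\,x^{2}-xy .
\end{equation*}

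The decisive structural observation is that the three vanishing jumps of \ref{degenerate} are exactly three consecutive zeros forbidden by \ref{non_degenerate}: because there is no North, North-East or East step, the coefficient of $y^{2}$ in $K$ is the constant $p_{-1,1}$. Consequently the discriminant
\begin{equation*}
\Delta(x)=(p_{-1,0}-x)^{2}-4p_{-1,1}\bigl(p_{1,-1}x^{2}+p_{0,-1}x+p_{-1,-1}\bigr)
\end{equation*}
of $K$ viewed as a quadratic in $y$ has degree two rather than the degree four occurring in the non-degenerate case, so the algebraic curve $\{K=0\}$ has only two branch points and is therefore rational (genus zero), in contrast with the generically elliptic curve underlying Theorem~\ref{main_theorem_non-zero}. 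The remaining hypotheses in \ref{degenerate} keep this reduction non-trivial: $p_{-1,1}\neq 0$ and $p_{1,-1}\neq 0$ ensure that $K$ stays genuinely quadratic in each variable, and $p_{-1,1}+p_{1,-1}\neq 1$ rules out the purely anti-diagonal walk for which $X+Y$ is conserved and the problem is one-dimensional. I would then uniformize this rational curve explicitly, reduce the functional equation to a boundary value problem of Riemann--Carleman type on the associated closed contour, and solve it by means of an explicit conformal gluing function, obtaining a contour-integral representation for the generating function of the $h_{i_0,j_0}$.

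From that representation I would extract the large-$i_0$ asymptotics by singularity analysis. The two roots of $\Delta$ are simple, so the two branches of $y(x)$ meet in square-root branch points; the dominant singularity $x=1/\rho$ of the generating function in $x$ (with $j_0$ fixed) sits at such a branch point, near which the generating function behaves like a constant minus a multiple of $(1-\rho x)^{1/2}$. A transfer theorem then turns this half-integer singularity into the decay $\rho^{i_0}/i_0^{3/2}$, with $\rho\in(0,1)$ because, starting far to the right, reaching $\mathcal{V}$ before $\mathcal{H}^*$ is a large-deviations event, and with a prefactor $C(j_0)\in(0,\infty)$ read off from the gluing function at the dominant branch point.

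The main obstacle is precisely this degeneracy. Since \ref{non_degenerate} fails, the general theory of \cite{FIM} and the argument behind Theorem~\ref{main_theorem_non-zero} cannot be invoked as a black box, and the genus-zero construction must be carried out by hand: one has to verify that a conformal gluing function for the rational curve still exists and has the regularity needed to solve the boundary value problem, that neither this map nor the boundary generating functions introduce a singularity closer to the origin than the branch point $1/\rho$, and that the dominant singularity is a genuine square root (exponent exactly $1/2$, not $3/2$ or an integer), so that both the exponent $3/2$ and the strict positivity $C(j_0)>0$ come out correctly. Establishing that the degeneracy simplifies, rather than destroys, each of these steps is where the real work lies.
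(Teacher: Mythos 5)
Your proposal is correct and follows essentially the same route as the paper: rerun the boundary-value-problem machinery of Theorem \ref{main_theorem_non-zero}, with the only new input being a conformal gluing function for the degenerate (genus-zero) kernel, and then read off the $\rho^{i_0}i_0^{-3/2}$ decay from the square-root branch point at $x_2=\rho$. The ``real work'' you anticipate is in fact already available in closed form: under \ref{degenerate} the curve $X([y_1,y_2])$ is an ellipse (\cite[Theorem 6.3.1]{FIM}), so the paper simply takes the classical explicit gluing function for an ellipse, applies a linear transformation to place its pole at $x_2$, and observes that Theorem \ref{expression_h} and the expansion \eqref{ded_non-zero} then carry over verbatim, with $\rho=x_2$ and $C(j_0)$ given by the same formula \eqref{def_Bj0} as $B(j_0)$.
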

Explicit expressions for the quantities $B(j_0)$, $C(j_0)$ and $\rho$ in Theorem \ref{main_theorem_non-zero} and Theorem \ref{theorem_Godreche} are derived in Section \ref{Asymptotic}. For the model of Godr\`{e}che {\it et al.}~\cite{Godr} in Figure \ref{Three_motivations}(c), Theorem \ref{theorem_Godreche} yields
          \begin{equation}\label{godd}
          \mathbb{P}_{(i_0,j_0)}[(X,Y) \textnormal{ hits } \mathcal{V} \textnormal{ before } \mathcal{H}^*]\sim  {C(j_0)}\frac{\rho^{i_0}}{i_0^{3/2}},
          \qquad i_0\to \infty,
     \end{equation}
     with
     \begin{equation*}
          \rho=\frac{2}{\lambda}[1-\lambda-\sqrt{(1-\lambda)(1-2\lambda)}].
     \end{equation*}
This result matches with Godr\`{e}che {\it et al.}~\cite[Equation (6.41)]{Godr}. As in  Opheusden and Redig \cite{OpRe}, Godr\`{e}che {\it et al.}~\cite{Godr} used a functional equation to derive the term $\rho^{i_0}{i_0^{-3/2}}$ in \eqref{godd}, but since the functional equation was not solved explicitly, it was impossible to derive the constant term  $C(j_0)$. We provide an exact expression for $C(j_0)$ in Lemma \ref{expression_B}.

\begin{figure}[t]
\begin{center}
\begin{picture}(000.00,780.00)
\hspace{-46mm}\hspace{-2mm}\includegraphics{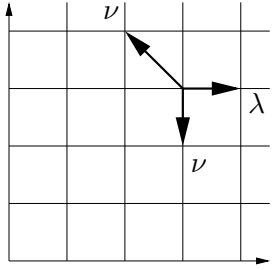}
\end{picture}
\end{center}
\vspace{-235mm}
\caption{A tandem queue.}
\label{Tandem}
\end{figure}

Let us finally present a result for random walks with a negative drift in the horizontal direction, and zero drift in the vertical direction.
\begin{thm}\label{main_theorem_non-zero_zero}
Let $(X,Y)$ be a random walk satisfying \ref{small_jumps}--\ref{drift}. If
     \begin{equation}
     \label{drift_neg_zero}
          \textstyle\sum_{-1\leq i,j\leq 1}i p_{i,j}<0 \quad \text{and} \quad  \sum_{-1\leq i,j\leq 1}jp_{i,j}=0,
     \end{equation}
     there exists a constant $D\in(0,\infty)$ such that
     \begin{equation*}
          \mathbb{P}_{(i_0,j_0)}[(X,Y) \textnormal{ hits } \mathcal{V} \textnormal{ before } \mathcal{H}^*]\sim  D \frac{j_0}{i_0^{1/2}},
          \qquad i_0\to \infty.
     \end{equation*}
\end{thm}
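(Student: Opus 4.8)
\emph{Setup and exact representation.} Write $\delta_x=\sum_{-1\le i,j\le 1}i\,p_{i,j}$ and $\delta_y=\sum_{-1\le i,j\le 1}j\,p_{i,j}$ for the two drifts, so that \eqref{drift_neg_zero} reads $\delta_x<0$, $\delta_y=0$. The plan is to realise $h_{i_0,j_0}:=\mathbb{P}_{(i_0,j_0)}[(X,Y)\textnormal{ hits }\mathcal{V}\textnormal{ before }\mathcal{H}^*]$ as the bounded harmonic function of the walk killed on the two axes, determined by the interior equation together with the boundary data $h\equiv 1$ on $\mathcal{V}$ and $h\equiv 0$ on $\mathcal{H}^*$. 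Encoding the interior values by $H(x,y)=\sum_{i,j\ge 1}h_{i,j}\,x^{i-1}y^{j-1}$ and summing the harmonicity relation against $x^{i}y^{j}$, I obtain the usual quarter-plane functional equation $K(x,y)H(x,y)=A(x)+B(y)$ (schematically), where $K$ is the kernel attached to $\{p_{i,j}\}$ and $A,B$ are one-variable functions, partly explicit —since the boundary values $0$ and $1$ are prescribed— and partly unknown. Exactly as in \cite{FIM} and as already carried out for Theorems \ref{main_theorem_zero}--\ref{theorem_Godreche}, the restriction of this identity to the curve $\{K=0\}$ turns the determination of the unknown parts into a boundary value problem of Riemann--Carleman type, solved by a Cauchy-type integral built from a conformal gluing function $w$ for the domain cut along the branch cut of $K$. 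This supplies the exact integral representation of $H$, which is the object I then analyse.

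\emph{Location and type of the dominant singularity.} To reach the asymptotics I would extract $h_{i_0,j_0}=[x^{i_0-1}]\,[y^{j_0-1}]\,H(x,y)$ and study $x\mapsto H(x,y)$ as $i_0\to\infty$. Since $0\le h_{i,j}\le 1$ and, under \eqref{drift_neg_zero}, the decay is sub-exponential, the $x$-series has radius of convergence exactly $1$; thus —in contrast with Theorems \ref{main_theorem_non-zero} and \ref{theorem_Godreche}, where $\delta_y<0$ pushes the dominant singularity out to $x=1/\rho>1$ and creates the exponential factor— here the leading behaviour is governed by $x=1$ with no exponential prefactor. The crucial point is the local nature of that singularity. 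I expect it to be of square-root type $(1-x)^{-1/2}$, arising because the kernel branch point $x_b$ that for $\delta_y<0$ sits at $1/\rho>1$ and yields the $i_0^{-3/2}$ branch-point contribution of Theorems \ref{main_theorem_non-zero}--\ref{theorem_Godreche} migrates to $x_b=1$ precisely when $\delta_y=0$; at $x=1$ the gluing function then behaves like $w(x)-w(1)\sim c\sqrt{1-x}$, and its coincidence with the singularity carried by $A,B$ at the corner $(1,1)$ upgrades the branch-point contribution to a genuine blow-up $(1-x)^{-1/2}$. A transfer theorem (Flajolet--Odlyzko) or a direct contour deformation around $x=1$ then gives $h_{i_0,j_0}\sim c(j_0)\,i_0^{-1/2}$, with $c(j_0)=[y^{j_0-1}]\,C(y)$ and $C(y)$ the coefficient of $(1-x)^{-1/2}$ in the singular expansion of $H(\cdot,y)$ at $x=1$.

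\emph{The factor $j_0$ and the constant.} It remains to show $c(j_0)=D\,j_0$ with $D\in(0,\infty)$. The matching form forces $C(y)\sim D/(1-y)^2$ near $y=1$, since $[y^{j_0-1}]\,(1-y)^{-2}=j_0$; I would establish this double pole directly, its two orders reflecting respectively the boundary value $h\equiv 1$ on $\mathcal{V}$ (one factor $1/(1-y)$) and the vanishing vertical drift $\delta_y=0$, whose harmonic coordinate in the vertical direction is asymptotically linear in $j_0$ (the second factor). Finiteness $D<\infty$ follows from $h\le 1$, and strict positivity $D>0$ from the non-vanishing of the leading coefficient of $w$ at $x=1$, itself guaranteed by \ref{non_degenerate}--\ref{non_degenerate2}. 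As a guide and consistency check, a probabilistic heuristic gives the same answer: with $\delta_x<0$ the walk reaches $\mathcal{V}$ after a time $\Theta(i_0)$, during which the centred vertical coordinate must stay positive, and the persistence probability of a zero-drift one-dimensional walk over time $\Theta(i_0)$ from height $j_0$ is of order $j_0/\sqrt{i_0}$, matching $D\,j_0/i_0^{1/2}$ —whereas the diffusive time $\Theta(i_0^2)$ of the doubly-zero-drift case would give $j_0/i_0$, recovering Theorem \ref{main_theorem_zero}.

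\emph{Main obstacle.} The delicate step is the local analysis at the corner $(1,1)$. Theorem \ref{main_theorem_non-zero_zero} lives exactly on the borderline between the exponential regime of Theorems \ref{main_theorem_non-zero}--\ref{theorem_Godreche} and the $1/i_0$ regime of Theorem \ref{main_theorem_zero}, so I must prove that the branch point sits precisely at $x=1$ and that the resulting singularity is exactly $(1-x)^{-1/2}$ and not milder, control the gluing function uniformly as the branch point approaches $x=1$, and extract the double pole $D/(1-y)^2$ to pin down $D$. Once this singular structure is secured, the passage from singularity to coefficient is routine.
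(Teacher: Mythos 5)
Your architecture --- functional equation, reduction to a boundary value problem, conformal gluing function, exact Cauchy-type representation, then local analysis in the borderline regime --- is the same as the paper's, and your probabilistic heuristic for the exponent $1/2$ and the linear factor $j_0$ is sound. But the proof stops exactly where the work is. The decisive step, the local analysis at the critical point, is announced as an expectation rather than carried out, and the one concrete claim you make about it is not correct in the normalization in which the exact representation holds: under \eqref{drift_neg_zero} one has $x_2=1$ (because the vertical drift vanishes) and $x_3>1$ (because the horizontal drift is negative), and the gluing function of \cite{KuRa} used in Theorem \ref{expression_h} has a \emph{simple pole} at $x_2=1$ (property \ref{pole_x_2}), not the behaviour $w(x)-w(1)\sim c\sqrt{1-x}$ you posit. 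The square-root singularity that produces $i_0^{-1/2}$ does not come from $w$ at all; it comes from $\sqrt{-d(t)}\sim \mathrm{const}\cdot\sqrt{1-t}$, because $1=x_2$ is now a \emph{simple} root of $d$ (in the doubly-zero-drift case it is a double root, which is what downgrades the answer to $j_0/i_0$), multiplied by the simple pole of $w'(t)/(w(t)-w(0))$ at $t=1$, whose residue is $-1$. The paper reads this off from the explicit formula \eqref{after_w(1)_infty} by a Watson-lemma evaluation of $\int_{x_1}^{1}t^{i_0}(1-t)^{-1/2}\,\mathrm{d}t=\Gamma(1/2)\,i_0^{-1/2}+O(i_0^{-3/2})$: here $i_0$ enters as a parameter of an explicit integral over $[x_1,1]$. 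You instead propose singularity analysis of a generating function $H(x,y)$ in the \emph{starting} coordinates, a different and harder object whose corner behaviour at $(1,1)$ you would still have to control; nothing in your sketch rules out, say, a milder singularity or a logarithmic correction there.

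The factor $j_0$ is a second gap. You propose to extract it from a conjectured double pole of the singular coefficient $C(y)$ at $y=1$; in the paper it is exact and elementary and requires no analysis in the $y$-variable: the leading coefficient is $\alpha_0=d'(1)^{1/2}\mu_{j_0}(1)$, and since $d(1)=0$ and $a(1)=c(1)$ under \eqref{drift_neg_zero}, formula \eqref{def_mu} gives
\begin{equation*}
\mu_{j_{0}}(1)=\frac{j_0\,[-b(1)]^{j_{0}-1}}{[2a(1)]^{j_{0}}}=\frac{j_0}{2a(1)},
\end{equation*}
whence $D=d'(1)^{1/2}/(2\sqrt{\pi}\,a(1))\in(0,\infty)$ and the dependence on $j_0$ is exactly linear for every $j_0$, not merely asymptotically. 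To turn your plan into a proof you would need to (i) establish the precise local behaviour at $x_2=1$ of both $\sqrt{-d}$ and the gluing function (simple root and simple pole, respectively), and (ii) replace the conjectured double pole in $y$ by an exact identification of the $j_0$-dependence.
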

An example of such a walk is displayed in Figure \ref{Tandem}, with $p_{1,0}=\lambda$, $p_{0,-1}=\nu$ and $p_{-1,1}=\nu$,
assuming $\lambda<\nu$. This walk represents the transitions of a tandem queue with Poisson arrivals at queue 1 with rate $\lambda$, exponential services at both queues with mean $1/\nu$, and all customers traversing from the first queue to the second queue before leaving the system. In this case \eqref{mainprob} describes the probability that, starting with $i_0$ customers in queue 1, and $j_0$ customers in queue 2, queue 2 empties before queue 1. The constant $D$ is identified in Section \ref{Asymptotic} and for the tandem queue in Figure \ref{Tandem} takes the form
     \begin{equation*}
          D=\sqrt{\frac{\nu-\lambda}{\pi\nu}}.
     \end{equation*}

The remainder of the paper is structured as follows. We first derive, in Section \ref{explicit}, an explicit expression for the probability \eqref{mainprob}, for which we rely heavily on earlier work in \cite{FR2,KuRa}. In fact, this analysis leads to expressions for the generating functions,  of which the probabilities in \eqref{mainprob} are the coefficients, in terms of integrals that involve certain conformal gluing functions.
In Section \ref{Asymptotic} we prove Theorems \ref{main_theorem_zero}--\ref{main_theorem_non-zero_zero} by asymptotically evaluating the exact integral expressions for the generating functions obtained in Section \ref{explicit}. Finally, inspired by  \cite{OpRe}, we present the continuum limit for the zero-drift random walk in Section \ref{driftcon}. This continuum limit allows for an easy and explicit analysis of the type of event described in \eqref{mainprob}. We show that the constant that arises in the continuum limit matches with the exact constant we obtain in our precise asymptotic expression for \eqref{mainprob}. This seems to suggest an interchange-of-limits, but establishing a formal proof of this fact remains an open problem.

\section{Exact integral representations}
\label{explicit}
\setcounter{equation}{0}

We first derive an explicit expression for the probability \eqref{mainprob} in terms of integral representations for the generating functions
     \begin{align}
          h^{i_{0},j_{0}}(x)&=
          \sum_{i\geq 1} \mathbb{P}_{(i_0,j_0)}
          [(X,Y) \text{ hits } \mathcal{H} \textnormal{ before } \mathcal{V}^* \text{ and at } (i,0)]x^{i-1},
          \label{exp_h}\\
          \widetilde h^{i_{0},j_{0}}(y)&=
          \sum_{j\geq 1} \mathbb{P}_{(i_0,j_0)}
          [(X,Y) \text{ hits } \mathcal{V} \textnormal{ before } \mathcal{H}^* \text{ and at } (0,j)]y^{j-1}.
          \nonumber
          \end{align}
          Let
          \begin{align}
          h^{i_0,j_0}_{0,0}&=
          \mathbb{P}_{(i_0,j_0)}
          [(X,Y) \text{ hits } (0,0) \textnormal{ before } \mathcal{H}^* \cup \mathcal{V}^*]\nonumber
     \end{align}
and note that the probability \eqref{mainprob} follows from
     \begin{equation}
     \label{sufficient_tdgf}
          \mathbb{P}_{(i_0,j_0)}[(X,Y) \textnormal{ hits } \mathcal{V} \textnormal{ before } \mathcal{H}^*]
          =\widetilde h^{i_{0},j_{0}}(1)+h^{i_0,j_0}_{0,0}=1-h^{i_{0},j_{0}}(1).
     \end{equation}
The second equality in \eqref{sufficient_tdgf} is due to the fact that under \ref{small_jumps}--\ref{drift},  the process hits the boundary with probability $1$, see \cite{FMM}. In Subsection \ref{Notations} we introduce some classical notions regarding the framework in \cite{FIM} that aims at solving functional equations for the above generating functions using the theory of boundary value problems. This framework leads to the results presented in Subsection \ref{Exp}.

\subsection{Basic properties of the kernel}
\label{Notations}
A common and crucial quantity of interest in the study of walks with small steps (as in \ref{small_jumps}) in the quarter plane is the kernel
     \begin{equation}
     \label{def_K}
          K(x,y)= x y[ \textstyle\sum_{-1\leq i,j\leq 1}
          p_{i,j }x^{i} y^{j}  -1].
     \end{equation}
It can also be written as
     \begin{equation*}
          K(x,y) = a(x) y^{2}+ b(x) y + c(x) = \widetilde{a}(y) x^{2}+
          \widetilde{b}(y) x + \widetilde{c}(y),
     \end{equation*}
where
     \begin{equation}
     \label{def_a_b_c}
          \begin{array}{llllllllll}
               a(x) =& \hspace{-3mm}p_{1,1}x^{2}+& \hspace{-3mm}p_{0,1}x+& \hspace{-3mm}p_{-1,1},
               \ \ \ b(x) =& \hspace{-3mm}p_{1,0}x^{2}-& \hspace{-3mm}x+& \hspace{-3mm}p_{-1,0},
               \ \ \ c(x) =& \hspace{-3mm}p_{1,-1}x^{2}+& \hspace{-3mm}p_{0,-1}x+& \hspace{-3mm}p_{-1,-1},\\
               \widetilde{a}(y) =& \hspace{-3mm} p_{1,1}y^{2}+& \hspace{-3mm}p_{1,0}y+& \hspace{-3mm}p_{1,-1},
               \ \ \ \widetilde{b}(y) =& \hspace{-3mm}p_{0,1}y^2-& \hspace{-3mm}y+& \hspace{-3mm}p_{0,-1},
               \ \ \ \widetilde{c}(y) =& \hspace{-3mm}p_{-1,1}y^{2}+& \hspace{-3mm}p_{-1,0}y+& \hspace{-3mm}p_{-1,-1}.
               \end{array}
     \end{equation}
We define
     \begin{equation}
     \label{def_d}
          d(x)=b(x)^{2}-4a(x) c(x),
          \qquad \widetilde{d}(y)=
          \widetilde{b}(y)^{2}-4
          \widetilde{a}(y)\widetilde{c}(y).
     \end{equation}
Under \ref{small_jumps}--\ref{non_degenerate}, the polynomial $d$ has degree three or four, and  we denote its roots by  $\{x_\ell\}_{1\leq \ell\leq 4}$ with
     \begin{equation*}
          |x_1|\leq |x_2|\leq |x_3|\leq |x_4|,
     \end{equation*}
and $x_4=\infty$ if $d$ is a third-degree polynomial. One can easily see that $x_1\in(-1,1)$ and that $x_4\in(1,\infty) \cup \{\infty\}\cup (-\infty,-1]$. As for the roots $x_2$ and $x_3$, they are positive and such that $x_1<x_2\leq 1\leq x_3$. Furthermore, $x_2=1$ (resp.\ $x_3=1$) if and only if $\sum_{-1\leq i,j\leq 1}j p_{i,j}= 0$ (resp.\ $\sum_{-1\leq i,j\leq 1}i p_{i,j}= 0$). The polynomial $\widetilde d$ in \eqref{def_d} and its roots $\{y_\ell\}_{1\leq \ell\leq 4}$ satisfy similar properties. These facts, as well as Lemma \ref{d_posi_nega} below, are proved in \cite[Chapter 2]{FIM}.

\begin{lem}
\label{d_posi_nega}
The polynomial $d$  is positive on $(x_2,x_3)\cup (x_4,x_1)$  and negative on $(x_1,x_2)\cup (x_3,x_4)$. Similarly, the polynomial $\widetilde d$ is positive on  $(y_2,y_3)\cup (y_4,y_1)$ and negative on $(y_1,y_2)\cup (y_3,y_4)$.
\end{lem}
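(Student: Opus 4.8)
My plan is to deduce the sign pattern from two ingredients: the fact---recalled just above from \cite[Chapter 2]{FIM}---that $x_1,x_2,x_3,x_4$ are the only zeros of $d$ on the circle $\mathbb{R}\cup\{\infty\}$, together with a single anchoring computation. Since $d$ has degree three or four, it has at most four zeros on $\mathbb{P}^1(\mathbb{R})$; once we know these are exactly $x_1,x_2,x_3,x_4$ and (generically) simple, $d$ keeps a constant sign on each of the four open arcs cut out on $\mathbb{R}\cup\{\infty\}$ and changes sign across each simple zero. The whole statement then reduces to fixing the sign on one single arc and propagating it. When $d$ has degree three the zero $x_4=\infty$ is genuinely simple on $\mathbb{P}^1$, and the sign flip across it is just the change of sign of an odd-degree polynomial between $x\to+\infty$ and $x\to-\infty$; when both drifts vanish $x_2=x_3=1$ is a double zero, the arc $(x_2,x_3)$ is empty, and the remaining assertions are unaffected.

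To anchor the sign I would work on $(x_2,x_3)\subset(0,\infty)$ and use the convexity of the step generating function. Writing $x=e^s$, $y=e^t$ and $f(e^s,e^t)=\sum_{-1\leq i,j\leq1}p_{i,j}e^{si+tj}=\mathbb{E}[e^{sU+tV}]$, where $(U,V)$ is the one-step increment, the map $(s,t)\mapsto f(e^s,e^t)$ is convex, and strictly so in the relevant directions under \ref{non_degenerate}--\ref{non_degenerate2}. Under \ref{drift} the sublevel set $\{f\leq1\}$ is a bounded convex set whose boundary $\{f=1\}$ passes through the origin $(s,t)=(0,0)$. Its projection onto the $s$-axis is a compact interval whose endpoints are exactly the abscissae where the vertical line $\{s=\mathrm{const}\}$ is tangent to $\{f=1\}$, i.e.\ where $\partial_t f=0$ forces a double root in $y$; these are precisely $s=\log x_2$ and $s=\log x_3$. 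Hence for $x\in(x_2,x_3)$ the equation $a(x)y^2+b(x)y+c(x)=0$ has two distinct real roots, so $d(x)>0$; at $x\in\{x_2,x_3\}$ the roots coincide and $d=0$; and for $x>0$ just outside $[x_2,x_3]$ the two $y$-roots become complex conjugate while $a(x)>0$ (guaranteed for $x>0$, since \ref{non_degenerate} forbids $p_{-1,1}=p_{0,1}=p_{1,1}=0$), so $d(x)<0$ on the adjacent parts of $(x_1,x_2)$ and $(x_3,x_4)$. A quick consistency check is the explicit value $d(1)=(\sum_{i,j}jp_{i,j})^2\geq0$, obtained directly from \eqref{def_a_b_c} using that $y=1$ is always a root of $K(1,\cdot)$.

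With the sign fixed on $(x_2,x_3)$ (positive) and on the positive-$x$ neighborhoods of $x_2,x_3$ (negative), sign alternation across the simple zeros propagates the pattern all the way around $\mathbb{P}^1(\mathbb{R})$: negative on $(x_1,x_2)$ and $(x_3,x_4)$, and positive on the arc $(x_4,x_1)$ running through infinity. The statement for $\widetilde d$ follows verbatim after the symmetry $(x,y)\mapsto(y,x)$, i.e.\ $p_{i,j}\mapsto p_{j,i}$, which exchanges $(a,b,c,d)$ with $(\widetilde a,\widetilde b,\widetilde c,\widetilde d)$ and $x_\ell$ with $y_\ell$. The main obstacle I anticipate is not the alternation but the anchoring in the degenerate regimes: one must verify that $\{f\leq1\}$ is genuinely bounded and two-dimensional under \ref{non_degenerate}--\ref{drift}, and treat with care the boundary cases where a branch point sits at $x=1$ or at infinity and where $x_2=x_3$. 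Determining the sign on the through-infinity arc $(x_4,x_1)$, where $a(x)$ may vanish or change sign, is then cleanest via alternation rather than via the convexity picture.
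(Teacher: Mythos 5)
The paper does not actually prove this lemma: the sentence just before it states that these facts ``are proved in \cite[Chapter 2]{FIM}'', so there is no in-text argument to compare yours against. What you propose is essentially a reconstruction of the classical proof from \cite{FIM}: all four roots of $d$ are real, $d$ alternates sign across its simple zeros on $\mathbb{P}^1(\mathbb{R})$ (with the correct treatment of $x_4=\infty$ in the degree-three case), and the sign is anchored on $(x_2,x_3)$ by reading $K(x,y)=0$ for $x,y>0$ as the level curve $\{f=1\}$ of the convex function $f(e^s,e^t)=\sum_{i,j}p_{i,j}e^{si+tj}$, whose projection onto the $s$-axis is $[\log x_2,\log x_3]$. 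The consistency check $d(1)=(a(1)-c(1))^2=(\sum_{i,j}jp_{i,j})^2$ is correct, and the reduction of the $\widetilde d$ statement to the $d$ statement via $p_{i,j}\mapsto p_{j,i}$ is fine.

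Two steps need shoring up. First, the inference ``for $x>0$ just outside $[x_2,x_3]$ the two $y$-roots become complex conjugate, so $d(x)<0$'' does not follow from the convexity picture alone: that picture only excludes \emph{positive} real roots of $a(x)y^2+b(x)y+c(x)$, whereas $d(x)>0$ is also compatible with two negative real roots. You need the additional continuity observation that at $x_2$ (resp.\ $x_3$) the merged root $-b/(2a)$ equals the ordinate $e^{t}$ of the tangency point and is therefore positive, so that for $x$ slightly outside the interval any real roots would, by continuity, still be positive---which the tangency picture forbids; hence the roots are genuinely non-real there. Second, in the zero-drift case $x_2=x_3=1$ your declared anchor interval $(x_2,x_3)$ is empty and $1$ is a double zero across which the sign does \emph{not} flip, so the whole pattern must be anchored precisely by this ``just outside'' argument at the double root $y=1$ (equivalently by $d''(1)<0$, a fact the paper itself only secures in a later remark via continuity in the parameters from one explicit walk); saying ``the remaining assertions are unaffected'' glosses over this. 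You are right that the remaining delicate input is the boundedness and two-dimensionality of $\{f\leq 1\}$ under \ref{non_degenerate}--\ref{drift} (note that excluding, e.g., a walk supported on $\{(1,-1),(-1,1)\}$ is exactly the role of \ref{non_degenerate2}); this is part of what \cite[Chapter 2]{FIM} establishes and the paper takes as given. With these repairs your argument closes correctly.
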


In what follows, we call $X(y)$ and $Y(x)$ the algebraic functions defined by $K(X(y),y)=0$ and $K(x,Y(x))=0$. With \eqref{def_K}--\eqref{def_d} we have
     \begin{equation}
     \label{expression_X_Y}
          X(y)=\frac{-\widetilde b(y)\pm \sqrt{\widetilde d(y)}}{2 \widetilde a(y)},
          \qquad Y(x)=\frac{-b(x)\pm \sqrt{d(x)}}{2 a(x)},
     \end{equation}
where above and throughout, we use the principal determination of the square root, defined on $\mathbb C\setminus (-\infty,0]$, see \cite[Chapter 4]{JS}. The functions $X(y)$ and $Y(x)$ both have two branches, called $X_0$, $X_1$ and $Y_0$, $Y_1$. Lemma \ref{d_posi_nega} asserts that they are meromorphic on $\mathbb{C}\setminus ([y_1,y_2]\cup [y_3,y_4])$ and $\mathbb{C}\setminus ([x_1,x_2]\cup [x_3,x_4])$, respectively.

In the non-zero-drift case, we fix notation letting $X_0(1)<X_1(1)$ and $Y_0(1)<Y_1(1)$ (see \cite[Lemma 2.3.4]{FIM}). Then we have on the whole of $\mathbb C$ (see \cite[Theorem 5.3.3]{FIM})
     \begin{equation}
     \label{ineq_whole}
          |X_0(y)|\leq |X_1(y)|, \qquad |Y_0(x)|\leq |Y_1(x)|.
     \end{equation}

In the zero-drift case, $X_0(1)=X_1(1)=Y_0(1)=Y_1(1)=1$, and the previous convention has to be changed: we then choose $\vert X_0(-1)\vert<\vert X_1(-1)\vert$ and $\vert Y_0(-1)\vert<\vert Y_1(-1)\vert$ (see \cite[Lemma 6.5.1]{FIM}). Then \eqref{ineq_whole} still holds on the whole of $\mathbb C$ (by a continuity argument).

Let us finally introduce
     \begin{equation}
     \label{def_mu}
          \mu_{j_{0}}(x) = \frac{1}{[2 a(x)]^{j_{0}}}
          \sum_{k=0}^{(j_{0}-1)/2} \binom{j_0}{2k+1}
          d(x)^{k} [ - b(x)]^{j_{0}-(2k+1)}.
     \end{equation}
This quantity appears in the expression of $h^{i_{0},j_{0}}(x)$ that we shall give in Theorem \ref{expression_h}. It is closely related to $Y(x)$: since $d(x)$ is non-positive for $x\in [x_1,x_2]\cup [x_3,x_4]$, see Lemma \ref{d_posi_nega}, the two branches \eqref{expression_X_Y} of $Y(x)$ are complex conjugates in these intervals. Expression \eqref{expression_X_Y} and some elementary calculations then yield
     \begin{equation}
     \label{before_mu}
          Y_0(x)^{j_0}-Y_1(x)^{j_0}=\pm 2 i \sqrt{-d(x)}\mu_{j_{0}}(x).
     \end{equation}
In order to determine the sign $\pm$ in \eqref{before_mu}, we have to specify whether $x\to [x_1,x_2]\cup[x_3,x_4]$ from above ($\downarrow$) or below ($\uparrow$): indeed, remember that the branches $Y_0(x)$ and $Y_1(x)$ are not meromorphic on $x\in[x_1,x_2]\cup[x_3,x_4]$. 
For instance, if $x\downarrow [x_1,x_2]$, we have $\pm = -$ in \eqref{before_mu}, and if $x\uparrow [x_1,x_2]$, we have $\pm = +$ (see \cite[Subsection 3.2]{KuRa}).

\subsection{A crucial conformal mapping}
\label{Exp}
Before deriving an expression for the generating function $h^{i_0,j_0}$ defined in \eqref{exp_h}, we first need to introduce a certain conformal mapping. For this, define the closed curve in the complex plane
     \begin{equation*}
          X([y_1,y_2])=X_0([y_1,y_2])\cup X_1([y_1,y_2]),
     \end{equation*}
which is symmetrical with respect to the real axis (since for $y\in [y_1,y_2]$, $X_0(y)$ and $X_1(y)$ are complex conjugates, see Lemma \ref{d_posi_nega} and \eqref{expression_X_Y}) and goes around the segment $[x_1,x_2]$ (see \cite[Theorem 5.3.3]{FIM}). Denote by
     \begin{equation*}
          \mathscr{G}X([y_1,y_2])
     \end{equation*}
the set surrounded by $X([y_1,y_2])$, which in addition contains $[x_1,x_2]$. For instance, in the case of the simple random walk (with $p_{1,0}=p_{0,1}=p_{-1,0}=p_{0,-1}=1/4$), $X([y_1,y_2])$ is the unit circle (see \cite[Theorem 5.3.3]{FIM}), hence $\mathscr{G}X([y_1,y_2])$ is the unit disc, since $-1\leq x_1,x_2\leq 1$ (see Subsection \ref{Notations}).

Let us now introduce a conformal gluing function for the set $\mathscr{G}X([y_1,y_2])$, i.e., a function $w$ such that
     \begin{enumerate}
          \item \label{w-mero} $w$ is meromorphic in $\mathscr{G}X([y_1,y_2])$, and for $t$ on the boundary of the latter domain, $\lim_{x\to t}w(x)$ exists (in $\mathbb C\cup \{\infty\}$), provided $x\in\mathscr{G}X([y_1,y_2])$;
          \item \label{w-conformal} $w$ establishes a conformal mapping of $\mathscr{G}X([y_1,y_2])$ onto the complex plane $\mathbb{C}$ cut along an interval;
          \item \label{w-boundary} For all $t$ on the boundary of $\mathscr{G}X([y_1,y_2])$, i.e., for all $t$ in $X([y_1,y_2])$, $w(t)=w(\overline{t})$.
     \end{enumerate}
\subsubsection*{Examples}
If the domain $\mathscr{G}X([y_1,y_2])$ has a simple shape (for example, if it is bounded by a circle, or by an ellipse), then it is an easy task to determine a suitable conformal mapping.

A first example concerns the case where $\mathscr{G}X([y_1,y_2])$ is the unit disc (this is the case for the simple random walk, see above). Then the function
     \begin{equation*}
          w(t)=\frac{t}{(t-1)^{2}}
     \end{equation*}
is a suitable conformal gluing function. Indeed, \ref{w-mero} is obvious; further, $w$ is a conformal mapping from the unit disc to $\mathbb C\setminus (-1/4,\infty)$, so \ref{w-conformal} is satisfied; finally, for $t=\exp(i\phi)$ we have $w(t)=-1/(4\sin(\phi)^2)=w(\overline{t})$, and \ref{w-boundary} follows.

Another example is when the domain $\mathscr{G}X([y_1,y_2])$ can be bounded by an ellipse (this is the case for the walks, which, as in \cite{Godr},  satisfy \ref{degenerate}). Then expressions for $w$ can then be found in the literature (see, e.g., \cite[Theorem 6.3.1]{FIM}).

\subsubsection*{On the existence and the uniqueness of $w$}
The existence (without any explicit expression) of functions $w$ satisfying \ref{w-mero}, \ref{w-conformal} and \ref{w-boundary} follows from general results on conformal mappings (see \cite[Chapter 2]{LIT}). Any linear transformation $(\alpha w+\beta)/(\gamma w+\delta)$ of $w$ is also a suitable conformal mapping. In particular, there is no uniqueness of $w$. However, there is uniqueness of $w$ up to these linear transformations (see \cite[Chapter 2]{LIT}).

\subsubsection*{On explicit expressions for $w$}
In most cases, finding an explicit expression for $w$ turns out to be challenging. However, for the class of walks at hand, expressions for $w$ are available in the literature. In the non-zero-drift case, an expression for $w$ in terms of certain elliptic functions is obtained in \cite[Section 4]{KuRa}. In the zero-drift case, an expression for $w$ is found in \cite[Section 2]{FR2} (it is recalled here in \eqref{def_CGF_w}), by considering the zero-drift case as the limiting case of the non-zero-drift case. We refer to \cite{FR2,KuRa,Ra} for additional details.

\subsection{Exact hitting probabilities}
\label{Exp-2}
Denote by $w$ a conformal gluing function as in Subsection \ref{Exp}. With the notation introduced in Subsection \ref{Notations}, we have the following result.
\begin{thm}
\label{expression_h}
For $x\in \mathbb{C}\setminus (x_3,x_4)$,
     \begin{equation*}
          h^{i_{0},j_{0}}(x)=x^{i_{0}}Y_{0}(x)^{j_{0}}+
          \frac{1}{\pi}\int_{x_{1}}^{x_2} t^{i_{0}}\mu_{j_{0}}(t)
          \bigg[ \frac{w'(t)}{w(t)-w(x)}-\frac{w'(t)}{w(t)-w(0)}
          \bigg]\sqrt{-d(t)}\textnormal{d}t.
     \end{equation*}
\end{thm}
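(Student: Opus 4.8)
The plan is to derive the functional equation for $h^{i_0,j_0}$ and then solve the associated boundary value problem using the conformal gluing function $w$. First I would establish the \emph{functional equation}. Starting from the defining probabilities in \eqref{exp_h}, I would condition on the first step of the walk and on the event of being absorbed on $\mathcal{H}$ before $\mathcal{V}^*$. A standard first-step/harmonicity argument, together with the Markov property, yields a relation of the archetypal form
\begin{equation*}
K(x,y)\, H(x,y) = \text{(boundary terms)},
\end{equation*}
where $H(x,y)$ collects the generating functions $h^{i_0,j_0}(x)$, $\widetilde h^{i_0,j_0}(y)$ and the corner term $h^{i_0,j_0}_{0,0}$, and the left-hand side features the kernel $K$ from \eqref{def_K}. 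The inhomogeneous part will carry the initial condition through a factor like $x^{i_0}y^{j_0}$, reflecting the starting state $(i_0,j_0)$; this is the source of the term $x^{i_0}Y_0(x)^{j_0}$ appearing in the statement.

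Second I would \emph{reduce to a boundary value problem on $X([y_1,y_2])$}. Specializing $y=Y_0(x)$ (a root of $K(x,y)=0$, so the kernel vanishes) kills the left-hand side and produces an identity relating $h^{i_0,j_0}(x)$ to $\widetilde h^{i_0,j_0}(Y_0(x))$ and the known inhomogeneity. Evaluating on the curve $X([y_1,y_2])$, where $X_0(y)$ and $X_1(y)$ are complex conjugates by Lemma \ref{d_posi_nega} and \eqref{expression_X_Y}, and using the symmetry relation \eqref{before_mu} for $Y_0^{j_0}-Y_1^{j_0}$, I would show that the boundary values of a suitable modification of $h^{i_0,j_0}$ satisfy a \emph{homogeneous Riemann--Hilbert (gluing) condition}: the function $x\mapsto h^{i_0,j_0}(x)-x^{i_0}Y_0(x)^{j_0}$ takes conjugate values at conjugate points $t,\overline t\in X([y_1,y_2])$. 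Precisely this conjugation-matching is what property \ref{w-boundary} of $w$ is built to handle.

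Third I would \emph{solve the boundary value problem via $w$ and the Plemelj--Sokhotski formulas}. Because $w$ glues the two sides of the curve (it identifies $t$ with $\overline t$), composing with $w$ transforms the gluing condition on $X([y_1,y_2])$ into a classical additive jump problem across a slit $w(X([y_1,y_2]))$ in the $w$-plane, whose density is $t^{i_0}\mu_{j_0}(t)\sqrt{-d(t)}$ read off from \eqref{before_mu}. The Cauchy-type integral $\frac{1}{2\pi i}\int \frac{(\cdots)}{w(t)-w(x)}w'(t)\,dt$ then reconstructs the holomorphic solution inside $\mathscr{G}X([y_1,y_2])$; the subtraction of the $w(0)$ term enforces the correct normalization/regularity at $x=0$ (equivalently, removes a spurious pole and pins down the free additive constant left by the non-uniqueness of $w$). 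Pushing the contour down to the real segment $[x_1,x_2]$, where $d$ is negative and the two branches of $Y$ are conjugate, turns the complex contour integral into the real integral displayed in the theorem, with the factor $1/\pi$ emerging from $\frac{1}{2\pi i}$ after accounting for the two sides of the cut and the $\pm$ sign convention in \eqref{before_mu}.

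The main obstacle I expect is \emph{justifying that the explicit Cauchy integral indeed solves the boundary value problem and yields a function analytic on all of $\mathbb{C}\setminus(x_3,x_4)$}, rather than merely inside $\mathscr{G}X([y_1,y_2])$. This requires verifying the requisite integrability and boundary regularity of the density $t^{i_0}\mu_{j_0}(t)\sqrt{-d(t)}$ at the endpoints $x_1,x_2$ (where $\sqrt{-d}$ vanishes, which helps), checking that the index of the problem is zero so that the homogeneous solution is unique up to the normalization fixed by the $w(0)$ subtraction, and confirming via an analytic continuation argument (using the relation $h^{i_0,j_0}(x)=x^{i_0}Y_0(x)^{j_0}+\cdots$ to cross the cut $[x_1,x_2]$) that the candidate extends analytically beyond the gluing domain up to the branch cut $(x_3,x_4)$. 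These are exactly the delicate points handled in the boundary-value-problem framework of \cite{FIM,FR2,KuRa}, so I would lean on those references for the analytic technicalities while supplying the problem-specific computation of the inhomogeneity and the density.
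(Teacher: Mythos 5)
Your proposal follows essentially the same route as the paper's own (sketched) proof: the functional equation \eqref{functional_equation} from a first-step analysis, reduction to a boundary value problem on $X([y_1,y_2])$ by evaluating at the conjugate branches $X_0(y),X_1(y)$, conversion via the conformal gluing function $w$, and solution by a Cauchy-type integral, with the analytic technicalities deferred to \cite{FIM,KuRa,Ra} exactly as the paper does. One small correction: the boundary condition \ref{boundary_condition} says that $h^{i_0,j_0}(t)-t^{i_0}Y_0(t)^{j_0}$ takes \emph{equal} (not conjugate) values at $t$ and $\overline{t}$, which is precisely the gluing condition matched by property \ref{w-boundary} of $w$.
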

The proof of Theorem \ref{expression_h} is given in Subsection \ref{App_exp_f}.
Theorem \ref{expression_h}, together with the fact that $Y_{0}(1)=1$ (see \cite[Equation (5.3.2)]{FIM}), yields
     \begin{equation}
     \label{expression_h(1)}
          h^{i_{0},j_{0}}(1)=1+
          \frac{1}{\pi}\int_{x_{1}}^{x_2} t^{i_{0}}\mu_{j_{0}}(t)
          \bigg[ \frac{w'(t)}{w(t)-w(1)}-\frac{w'(t)}{w(t)-w(0)}
          \bigg]\sqrt{-d(t)}\textnormal{d}t.
     \end{equation}
Thanks to \eqref{sufficient_tdgf}, \eqref{expression_h(1)} immediately yields an expression for the hitting probability \eqref{mainprob}.

\subsection{Proof of Theorem \ref{expression_h}}
\label{App_exp_f}

We only sketch the proof of Theorem \ref{expression_h}, because we largely mimic the proof in \cite{KuRa} for the case of two positive drifts, i.e.,
     \begin{equation*}
          \textstyle\sum_{-1\leq i,j\leq 1}i p_{i,j}> 0,\qquad\sum_{-1\leq i,j\leq 1}j p_{i,j}> 0.
     \end{equation*}
A close examination of the proof in \cite{KuRa} makes clear that the result for positive drifts remains to hold in the zero-drift and negative-drift cases. The only difference between these cases is that the conformal gluing function $w$ introduced in Subsection \ref{Exp} will be different. To be somewhat more specific, we now present the four main steps of the proof of Theorem \ref{expression_h}, closely following the original approaches of \cite{FIM} and \cite{KuRa}.

\medskip

\textit{Step 0.} Using simple recursion relations it can be shown that $h^{i_{0},j_{0}}(x)$, $\widetilde h^{i_{0},j_{0}}(y)$ and $h_{0,0}^{i_{0},j_{0}}$ satisfy the functional equation (see \cite[Section 2]{KuRa})
     \begin{multline}
     \label{functional_equation}
          h^{i_{0},j_{0}}(x)+\widetilde h^{i_{0},j_{0}}(y)+h_{0,0}^{i_{0},j_{0}}-x^{i_0}y^{j_0}=K(x,y)\times\\
          \sum_{i,j\geq 1}\sum_{n\geq 0}\mathbb{P}_{(i_0,j_0)}[(X(n),Y(n))=(i,j),\ (X,Y) \text{ did not hit } \mathcal{H}\cup\mathcal{V} \text{ between } 0 \text{ and } n].
     \end{multline}
\medskip

\textit{Step 1.} Thanks to the fundamental identity \eqref{functional_equation}, we prove that $h^{i_0,j_0}(x)$ satisfies the following boundary value
problem. Let $\mathscr{G}X([y_1,y_2])$ be the set introduced in Subsection \ref{Exp}. Then
     \begin{enumerate}
          \item \label{class_function} $h^{i_0,j_0}$ is holomorphic in $\mathscr{G}X([y_1,y_2])$;
          \item \label{boundary_condition} For all $t$ on the boundary of $\mathscr{G}X([y_1,y_2])$,
          \begin{equation*}
               h^{i_0,j_0}(t)-h^{i_0,j_0}(\overline{t})=t^{i_0}Y_0(t)^{j_{0}}-\overline{t}{}^{i_0}Y_0(\overline{t})^{j_{0}}.
          \end{equation*}
     \end{enumerate}
The problem of finding a function satisfying \ref{class_function}--\ref{boundary_condition} is a particular instance of a boundary value problem with shift (the complex conjugation plays in \ref{boundary_condition} the role of the shift), see \cite{LIT} for an extensive treatment of this topic. Items \ref{class_function} and \ref{boundary_condition} follow from \cite[Theorem 6.5.2]{FIM}. Note that \ref{boundary_condition} is easily proved: it essentially suffices to evaluate \eqref{functional_equation} both at $X_0(y)$ and $X_1(y)$, for all $y\in[y_1,y_1]$. In this way, the kernel $K(x,y)$ vanishes, and in fact the right-hand side of \eqref{functional_equation} too. Finally, taking the difference of the equations corresponding to $X_0(y)$ and $X_1(y)$ leads to \ref{boundary_condition}.

\medskip

\textit{Step 2.} We transform the problem \ref{class_function}--\ref{boundary_condition} into a boundary value problem with a boundary condition on a segment. This can be done via a conformal gluing function for the set $\mathscr{G}X([y_1,y_2])$ as discussed in Subsection \ref{Exp}. We refer to \cite[Section 3]{Ra} for more details.

\medskip

\textit{Step 3.} The solution of the latter boundary value problem is elementary, see \cite{LIT} or \cite[Section 3]{Ra}, and  can be formulated in terms of Cauchy integrals. The explicit integral representation of $h^{i_0,j_0}(x)$ follows. A similar expression can be obtained for $\widetilde h^{i_0,j_0}(y)$.

\medskip

{\noindent {\bf Remark.}}
We can now elaborate on the reasons for assuming \ref{small_jumps} and \ref{non_degenerate2}. First, if we would allow larger jumps, we could still obtain a functional equation for the generating function of the hitting probabilities, but the technique that is used to solve \eqref{functional_equation} does not carry over. Further, finding a conformal gluing function $w$ requires introducing the Riemann surface defined by
     \begin{equation*}
          \{(x,y)\in\mathbb{C}^2: K(x,y)=0\}.
     \end{equation*}
 For small jumps as in \ref{small_jumps}, this Riemann surface has genus $0$ or $1$, see \cite{FIM}. If the jumps are larger, the genus of this Riemann surface increases, and the problem of finding $w$ becomes more intricate. The reason for assuming \ref{non_degenerate2} is that for $p_{1,1}+p_{-1,1}+p_{-1,-1}+p_{1,-1}=1$ it is possible that the branches \eqref{expression_X_Y} of the algebraic functions $X(y)$ and $Y(x)$ are meromorphic on the whole $\mathbb{C}$. In this case we cannot state (and solve!) a boundary value problem, since this requires complex conjugate branches on some interval.

\section{Asymptotic analysis}\label{Asymptotic}
\setcounter{equation}{0}

We now prove Theorems \ref{main_theorem_zero}--\ref{main_theorem_non-zero_zero}, by asymptotically evaluating the integral expressions of the generating function $h^{i_{0},j_{0}}$ derived in Section \ref{explicit}.

\subsection{Proof of Theorem \ref{main_theorem_zero} (zero-drift case)}
Assume \eqref{drift_zero} and let $a$ be defined as in \eqref{def_a_b_c}, $d$ as in \eqref{def_d}, and
     \begin{equation}
     \label{exp_theta}
          \theta=\arccos \bigg(-\frac{\sum_{-1\leq i,j\leq 1}i j p_{i,j}}
          {[\sum_{-1\leq i,j\leq 1}i^2 p_{i,j}]^{1/2}\cdot [\sum_{-1\leq i,j\leq 1}j^2 p_{i,j}]^{1/2}}\bigg).
     \end{equation}

\begin{lem}
\label{expression_C}
The constant $A$ in Theorem \ref{main_theorem_zero}  is given by
     \begin{equation}
     \label{eff_expression_C}
          \frac{[-d''(1)]^{1/2}}{2^{3/2}\theta a(1)}.
     \end{equation}
\end{lem}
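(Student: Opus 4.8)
My plan is to read the asymptotics straight off the exact formula \eqref{expression_h(1)}. Combining \eqref{sufficient_tdgf} with \eqref{expression_h(1)} gives
\[
\mathbb{P}_{(i_0,j_0)}[(X,Y)\textnormal{ hits }\mathcal{V}\textnormal{ before }\mathcal{H}^*]=1-h^{i_0,j_0}(1)=-\frac{1}{\pi}\int_{x_1}^{x_2}t^{i_0}F(t)\,\mathrm{d}t,
\]
where $F(t)=\mu_{j_0}(t)\bigl[\frac{w'(t)}{w(t)-w(1)}-\frac{w'(t)}{w(t)-w(0)}\bigr]\sqrt{-d(t)}$. This is a Laplace-type integral over a real interval contained in $(-1,1]$, so as $i_0\to\infty$ the factor $t^{i_0}$ localises the mass at the right endpoint $t=x_2$, and the whole problem reduces to the local behaviour of $F$ there. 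In the zero-drift regime \eqref{drift_zero} the facts recalled in Subsection \ref{Notations} give $x_2=x_3=1$; thus the endpoint is $t=1$, and since by Lemma \ref{d_posi_nega} $d<0$ on both sides of $1$ (on $(x_1,1)$ and on $(1,x_4)$) while $d(1)=0$, the point $t=1$ is a \emph{double} root of $d$. Hence $d'(1)=0$, $d''(1)<0$, and $\sqrt{-d(t)}\sim[-d''(1)/2]^{1/2}(1-t)$ as $t\uparrow 1$.

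Next I would analyse the $w$-bracket. Writing it as $\frac{\mathrm{d}}{\mathrm{d}t}\log\frac{w(t)-w(1)}{w(t)-w(0)}$ makes manifest its invariance under the Möbius freedom in $w$ noted in Subsection \ref{Exp}. The key local input is that at the pinched point $t=1$ one has $w(t)-w(1)\sim c\,(t-1)^{\pi/\theta}$ for some $c\neq 0$, while $w(t)-w(0)$ stays regular and nonzero; consequently the bracket has a simple pole at $t=1$ with residue $\pi/\theta$. Multiplying the resulting $\frac{\pi/\theta}{t-1}$ by the vanishing factor $\sqrt{-d(t)}$ produces the finite limit
\[
F(1):=\lim_{t\uparrow 1}F(t)=-\frac{\pi}{\theta}\,\mu_{j_0}(1)\Bigl[\frac{-d''(1)}{2}\Bigr]^{1/2}.
\]
It remains to evaluate $\mu_{j_0}(1)$: since $d(1)=0$ kills every term but $k=0$ in \eqref{def_mu}, and $Y_0(1)=1$ forces $-b(1)=2a(1)$, one gets $\mu_{j_0}(1)=j_0/(2a(1))$, which already explains the linear dependence on $j_0$.

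Finally I would invoke Watson's lemma: with $F$ continuous at $1$ and the rest of the interval contributing exponentially less (any pole of the $w(0)$-term at $t=0$ is harmless, being suppressed by $t^{i_0}$), one has $\int_{x_1}^{1}t^{i_0}F(t)\,\mathrm{d}t\sim F(1)/i_0$. Substituting back,
\[
1-h^{i_0,j_0}(1)\sim-\frac{F(1)}{\pi i_0}=\frac{j_0}{i_0}\cdot\frac{[-d''(1)]^{1/2}}{2^{3/2}\theta a(1)},
\]
which is exactly \eqref{eff_expression_C}. As a consistency check, for the voter model of Figure \ref{Three_motivations}(b) one computes from \eqref{exp_theta} that $\theta=\pi/3$, together with $a(1)=1/3$ and $-d''(1)=2/3$, giving $A=3\sqrt{3}/(2\pi)$, in agreement with the value quoted after Theorem \ref{main_theorem_zero}.

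The one genuinely delicate point is the local exponent $\pi/\theta$ of $w$ at $t=1$ --- equivalently, that the curve $X([y_1,y_2])$ pinches into a corner of interior opening $2\theta$ there, with $\theta$ as in \eqref{exp_theta}. This single fact carries the entire $\theta$-dependence of $A$, and establishing it rigorously is the crux of the argument: I would extract it from the explicit zero-drift expression for $w$ of \cite{FR2} by a direct local expansion near $t=1$, rather than attempt a purely geometric corner argument. Once that exponent is in hand, everything else is routine endpoint singularity analysis.
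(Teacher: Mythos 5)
Your proposal is correct and follows essentially the same route as the paper: reduce to the endpoint behaviour at $t=x_2=1$ of the exact integral \eqref{expression_h(1)}, use the double zero of $d$ at $1$ and the singularity exponent $\pi/\theta$ of $w$ (which the paper likewise imports from \cite{FR2} as \eqref{behavior_chi}), compute $\mu_{j_0}(1)=j_0/(2a(1))$, and conclude by a Watson-type endpoint estimate. The only cosmetic difference is that you phrase the $w$-bracket in a M\"obius-invariant way with residue $\pi/\theta$, whereas the paper fixes the normalization $w(1)=\infty$ so that the $w(1)$-term drops and only $\beta_{-1}=-\pi/\theta$ survives; your numerical check for the voter model is also consistent.
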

In order to prove Theorem \ref{main_theorem_zero} and Lemma \ref{expression_C}, we first identify an appropriate conformal gluing function $w$ for the domain $\mathscr{G}X([y_1,y_2])$. Define
     \begin{equation}
     \label{def_f}
          f(t)=\left\{\begin{array}{lll}
          \displaystyle d''(x_{4})/6+d'(x_{4})/[t-x_{4}]& \text{if} & x_{4}\neq \infty,\\
          \displaystyle d''(0)/6+d'''(0)t/6 \phantom{{1^1}^{1}}& \text{if} & x_{4}=\infty.\end{array}\right.
     \end{equation}

The next two lemmas are taken from \cite[Section 2]{FR2}.
\begin{lem}
Let $f$ be as in \eqref{def_f}. In the zero-drift case \eqref{drift_zero}, the function $w$ defined by
\begin{equation}
     \label{def_CGF_w}
          w(t)=\sin\left(\frac{\pi}{\theta}
          \left[\arcsin\left\{\left[\frac{1}{3}-\frac{2f(t)}{d''(1)}\right]^{-1/2}\right\}-\frac{\pi}{2}\right]\right)^2
     \end{equation}
is a suitable conformal gluing function for the set $\mathscr{G}X([y_1,y_2])$.
\end{lem}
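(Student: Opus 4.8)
The plan is to realize $w$ as an explicit finite composition of elementary conformal maps and then to verify, in turn, the three defining properties \ref{w-mero}, \ref{w-conformal}, \ref{w-boundary} of a conformal gluing function. Introduce the intermediate Möbius map $\Lambda(t)=\frac13-\frac{2f(t)}{d''(1)}$, so that $w=s\circ\lambda\circ r\circ\Lambda$, where $r(z)=z^{-1/2}$ (principal branch), $\lambda(u)=\frac{\pi}{\theta}\bigl(\arcsin u-\frac{\pi}{2}\bigr)$, and $s(v)=\sin(v)^2$. Since $f$ in \eqref{def_f} is a Möbius transformation of $\widehat{\mathbb C}$ — affine when $x_4=\infty$, genuinely fractional–linear with its single pole at $x_4\notin\mathscr{G}X([y_1,y_2])$ when $x_4$ is finite — the map $\Lambda$ is Möbius with real coefficients, and the two cases of \eqref{def_f} can be handled uniformly.

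The geometric heart of the argument is to pin down the image $\Lambda(\mathscr{G}X([y_1,y_2]))$. Here I would use the known location of the roots of $d$, namely $x_1<x_2=x_3=1<x_4$ in the zero-drift case (so $d$ has a double root at $1$, $d(1)=d'(1)=0$, $d''(1)\neq0$), to compute the images of the distinguished points. A short computation — resting on the polynomial identity $d''(x_4)+6\,d'(x_4)/(1-x_4)=d''(1)$ for a quartic with double root at $1$, and on $d''(1)=6f(1)$ in the cubic case — gives $\Lambda(1)=0$, so the corner $t=1$ is sent by $r$ to $\infty$ and ultimately, through $\lambda$ and $s$, to the free end of the slit. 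The claim to establish, which is the content imported from \cite[Section 2]{FR2}, is that $\Lambda$ maps $\mathscr{G}X([y_1,y_2])$ conformally onto a half-plane (equivalently $r\circ\Lambda$ maps it onto a sector), sending the boundary curve $X([y_1,y_2])$ onto a ray of the real axis and the slit $[x_1,1]$ onto the complementary ray. Granting this, property \ref{w-mero} follows because the branch cut of $r$ (the ray $(-\infty,0]$) and those of $\arcsin$ (the rays $(-\infty,-1]\cup[1,\infty)$) are avoided by $r\circ\Lambda$ on the open domain, leaving $w$ meromorphic with at worst the single pole arising from the corner.

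For property \ref{w-conformal} I would compose injectivities: $\Lambda$ is conformal by the claim above, $r$ is conformal on the slit plane, $\arcsin$ maps the slit plane conformally onto the strip $|\mathrm{Re}\,v|<\pi/2$, and the role of $s\circ\lambda$ is exactly that of a Schwarz–Christoffel power map: the factor $\pi/\theta$ rescales the sector of opening $\theta$ — the covariance angle dictated by \eqref{exp_theta} — so that $\sin(\cdot)^2$ opens it into the full plane cut along an interval, which is the required target; tracking the images of $x_1$, of $1$, and of the two real crossing points of $X([y_1,y_2])$ identifies the slit and shows surjectivity. Property \ref{w-boundary} is then where $\theta$ does its essential work, and I would obtain it by a conjugation-symmetry reduction. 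Every building block has real coefficients and satisfies $g(\overline z)=\overline{g(z)}$ off its cuts, so $f(\overline t)=\overline{f(t)}$ propagates through $\Lambda,r,\lambda,s$ to give $w(\overline t)=\overline{w(t)}$; hence the gluing identity $w(t)=w(\overline t)$ is equivalent to $w$ being real on $X([y_1,y_2])$. As $s(v)=\sin(v)^2$ is real precisely when $\mathrm{Re}\,v\in\frac{\pi}{2}\mathbb{Z}$ or $\mathrm{Im}\,v=0$, reality on the boundary reduces to the assertion that $\lambda(r(\Lambda(t)))$ has real part in $\frac{\pi}{2}\mathbb{Z}$ along the curve — which is exactly what the geometric claim guarantees, the two conjugate arcs $X_0([y_1,y_2])$ and $X_1([y_1,y_2])$ being folded onto one another by the $\frac{\pi}{\theta}$-opening.

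I expect the main obstacle to be the geometric claim of the second paragraph: controlling precisely the image of the domain, and above all of the boundary curve $X([y_1,y_2])$, under $\Lambda$, together with the branch bookkeeping for $r$ and $\arcsin$ needed to upgrade local conformality to global injectivity of the composition. This is the step that genuinely uses the explicit analysis of the kernel and of the curve $X([y_1,y_2])$ carried out in \cite{FIM,FR2}; once the image of $\Lambda$ is secured, the remaining verifications of \ref{w-mero}–\ref{w-boundary} are routine.
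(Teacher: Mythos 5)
The paper offers no proof of this lemma: it is introduced with the sentence ``The next two lemmas are taken from \cite[Section 2]{FR2}'', so the only proof to compare against is the one in that reference. There the formula \eqref{def_CGF_w} is not verified by direct inspection of the composed map; it is produced by uniformizing the Riemann surface $\{K(x,y)=0\}$ with Weierstrass elliptic functions, writing the gluing function in the non-zero-drift case as a composition of two $\wp$-functions with different period lattices (applied to the M\"{o}bius function $f$), and then letting the drift vanish: the branch points $x_2$ and $x_3$ collide at $1$, the elliptic functions degenerate into trigonometric ones, and the angle $\theta$ of \eqref{exp_theta} emerges as the limit of the relevant period ratio. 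Your decomposition $w=s\circ\lambda\circ r\circ\Lambda$ is exactly the degenerate form of that composition, so your reading of the formula is faithful, and your bookkeeping $\Lambda(1)=0$ (via the identity $d''(x_4)+6d'(x_4)/(1-x_4)=d''(1)$ for a quartic with a double root at $1$, and $d''(1)=6f(1)$ in the cubic case) checks out.

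The genuine gap is the ``geometric claim'' that you isolate in your second paragraph and then explicitly grant: that $r\circ\Lambda$ (equivalently the $\arcsin$ composition) maps $\mathscr{G}X([y_1,y_2])$ conformally onto a standard region bounded by two edges meeting at the image of $t=1$, with the conjugate arcs $X_0([y_1,y_2])$ and $X_1([y_1,y_2])$ landing on those two edges, and---crucially---with the opening of that region equal to exactly the $\theta$ of \eqref{exp_theta}. Every one of \ref{w-mero}--\ref{w-boundary} is deduced from this claim in your argument, so it is not an ingredient of the proof but its entire content, and nothing in the proposal establishes it. It is not routine: $\Lambda$ is a real M\"{o}bius map, so it sends $X([y_1,y_2])$ to another conjugation-symmetric curve, but there is no a priori reason that curve is straightened by $z^{-1/2}$ and $\arcsin$ into a region with rectilinear edges, nor that the total variation of the argument along $X_0([y_1,y_2])$ comes out to the correlation angle $\theta$; securing this requires either the elliptic uniformization of \cite{FR2,FIM} or a genuine computation with the parametrization $y\mapsto X_0(y)$, $y\in[y_1,y_2]$, of the boundary. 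I would also note that your intermediate identifications are not mutually consistent as stated: if $\Lambda$ sent the domain onto a half-plane, then $r\circ\Lambda$ would send it onto a sector of opening $\pi/2$ independently of $\theta$, which cannot be the general picture---a sign that the images of the intermediate maps have not actually been determined. As an organizational outline of a direct verification the proposal is sensible, but as a proof it assumes precisely what must be shown.
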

This exact expression renders the behavior of $w$ near $t=1$.
     \begin{lem}
     Let $w$ be as in \eqref{def_CGF_w} and $\theta$ as in \eqref{exp_theta}. There exists an $\alpha\neq 0$ such that
     \begin{equation}
     \label{behavior_chi}
          w(t)=\frac{\alpha +o(1)}{(1-t)^{\pi/\theta}}, \qquad t\uparrow1.
     \end{equation}
     \end{lem}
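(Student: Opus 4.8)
The plan is to reduce everything to the local behaviour near $t=1$ of the inner quantity
\[
g(t):=\frac13-\frac{2f(t)}{d''(1)},
\]
so that $w(t)=\sin\bigl(\tfrac{\pi}{\theta}[\arcsin(g(t)^{-1/2})-\tfrac{\pi}{2}]\bigr)^2$ by \eqref{def_CGF_w}. The structural fact I would exploit is that in the zero-drift case \eqref{drift_zero} one has $x_2=x_3=1$, so $x=1$ is a double root of $d$; since $x_1\in(-1,1)$ and $x_4\in(1,\infty)\cup\{\infty\}\cup(-\infty,-1]$ are distinct from $1$, this root is \emph{exactly} double, whence $d(1)=d'(1)=0$ and $d''(1)\neq0$. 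Writing $d(t)=p_4(t-x_1)(t-1)^2(t-x_4)$ with $p_4\neq0$ the leading coefficient (the cubic case $x_4=\infty$ being handled identically via the second line of \eqref{def_f}), I would first prove that $g$ vanishes to first order at $t=1$, and then feed the resulting $g(t)\sim c(1-t)$ into the nested trigonometric expression and expand.

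The key algebraic step is the identity $f(1)=d''(1)/6$, i.e. $g(1)=0$. With $f$ as in the first line of \eqref{def_f} this reads $d''(1)-d''(x_4)=6\,d'(x_4)/(1-x_4)$. Using $d'(x_4)=p_4(x_4-x_1)(x_4-1)^2$, $d''(1)=2p_4(1-x_1)(1-x_4)$ and the expansion of $d''(x_4)$, I would check that both sides reduce to $6p_4(x_4-1)(x_1-x_4)$, so the identity holds; the cubic case is the analogous elementary verification. Differentiating gives $f'(1)=-d'(x_4)/(1-x_4)^2\neq0$, hence $g'(1)=-2f'(1)/d''(1)\neq0$, and tracking signs through $x_1<1<x_4$ yields $g'(1)<0$. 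Consequently $g(t)=c(1-t)+o(1-t)$ with $c:=-g'(1)>0$; in particular $g(t)>0$ for $t$ just below $1$, so $G(t):=g(t)^{-1/2}$ is real and $G(t)\to+\infty$ as $t\uparrow1$.

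It then remains to expand the outer formula. Since $G(t)\to+\infty$, I would use the continuation $\arcsin(x)-\tfrac{\pi}{2}=\pm i\ln\bigl(x+\sqrt{x^2-1}\bigr)$, valid for real $x>1$; the sign is a branch choice that disappears after squaring (and confirms that $w$ is real there), giving
\[
w(t)=-\sinh^2\!\Bigl(\tfrac{\pi}{\theta}\ln\bigl(G(t)+\sqrt{G(t)^2-1}\bigr)\Bigr).
\]
With $\ln\bigl(G+\sqrt{G^2-1}\bigr)=\ln(2G)+o(1)$ and $\sinh^2\beta=\tfrac14e^{2\beta}(1+o(1))$ as $\beta\to\infty$, one obtains $w(t)=-\tfrac14\,4^{\pi/\theta}\,g(t)^{-\pi/\theta}(1+o(1))$. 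Substituting $g(t)\sim c(1-t)$ produces exactly \eqref{behavior_chi}, with $\alpha=-\tfrac14(4/c)^{\pi/\theta}\neq0$.

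The part I expect to be the real crux is the identity $f(1)=d''(1)/6$: it is this cancellation that forces the exponent to come out as precisely $\pi/\theta$, and it is the only place where the specific choice of $f$ in \eqref{def_f} (rather than an arbitrary M\"obius normalisation) is genuinely used. Conceptually it reflects that $\mathscr{G}X([y_1,y_2])$ has an interior corner of opening $\theta$ at the double point $x_2=x_3=1$, which a conformal gluing function must straighten with local exponent $\pi/\theta$; the explicit computation above makes this rigorous and, moreover, pins down $\alpha$. The only remaining technical care is the branch of $\arcsin$ beyond $1$, but since only $w=\sin(\cdot)^2$ enters, the ambiguity is harmless.
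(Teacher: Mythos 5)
Your argument is correct, but it cannot be ``the same approach as the paper'' for the simple reason that the paper offers no proof at all: both this lemma and the preceding one are imported verbatim from \cite[Section~2]{FR2}, where $w$ is obtained as a degeneration of the elliptic-function gluing map of the non-zero-drift case. What you supply instead is a self-contained verification directly from the closed form \eqref{def_CGF_w}, and you have correctly isolated the crux, namely the identity $f(1)=d''(1)/6$; your check of it via $d(t)=p_4(t-x_1)(t-1)^2(t-x_4)$, $d'(x_4)=p_4(x_4-x_1)(x_4-1)^2$ and $d''(1)=2p_4(1-x_1)(1-x_4)$ is right (both sides of $d''(1)-d''(x_4)=6d'(x_4)/(1-x_4)$ do reduce to $6p_4(x_4-1)(x_1-x_4)$), as is the exactness of the double root at $1$ (since $x_1\in(-1,1)$ and $x_4\notin\{1\}$, consistent with the paper's own remark that $d''(1)\neq0$). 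The subsequent continuation $\arcsin(x)-\pi/2=\mp i\ln(x+\sqrt{x^2-1})$ and the $\sinh^2$ asymptotics are sound, and the branch ambiguity indeed washes out after squaring, yielding $\alpha=-\tfrac14(4/c)^{\pi/\theta}\neq0$. One small imprecision: you track signs ``through $x_1<1<x_4$'', but the paper only guarantees $x_4\in(1,\infty)\cup\{\infty\}\cup(-\infty,-1]$, so $x_4$ may be negative; fortunately $g'(1)=(x_4-x_1)/[(1-x_1)(1-x_4)]$ is negative in that case too (numerator and the factor $1-x_4$ both flip sign), and the cubic limit gives $g'(1)=-1/(1-x_1)<0$, so the conclusion $c>0$ survives --- but you should state the case distinction rather than assume $x_4>1$. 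What your route buys, beyond independence from \cite{FR2}, is an explicit constant $\alpha$, which the paper never needs (only $\beta_{-1}=-\pi/\theta$ enters the asymptotics); what it costs is that the conformal-gluing interpretation of the exponent $\pi/\theta$ as a corner-opening angle remains, as you note, a heuristic rather than the engine of the proof.
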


\begin{proof}[Proof of Theorem \ref{main_theorem_zero} and Lemma {\rm\ref{expression_C}}]
Thanks to \eqref{sufficient_tdgf}, it is enough to prove that
     \begin{equation*}
          h^{i_{0},j_{0}}(1)=1-A j_0/i_0+O(1/i_0^2)
     \end{equation*}
with $A$ as in \eqref{eff_expression_C}. First, since $w(1)=\infty$ (see \eqref{behavior_chi}) and since $x_2=1$ (see Subsection \ref{Notations}), \eqref{expression_h(1)} yields
     \begin{equation}
     \label{after_w(1)_infty}
          h^{i_{0},j_{0}}(1)=1-
          \frac{1}{\pi}\int_{x_{1}}^{1} t^{i_{0}}\mu_{j_{0}}(t)
          \frac{w'(t)}{w(t)-w(0)}\sqrt{-d(t)}\textnormal{d}t.
     \end{equation}
For $t\in[x_1,1]$, we introduce
     \begin{equation}
     \label{alpha_beta}
          \mu_{j_0}(t)\sqrt{-d(t)}=\sum_{k\geq 1} \alpha_k (t-1)^k,
          \qquad \frac{w'(t)}{w(t)-w(0)}= \sum_{k\geq-1} \beta_k (t-1)^{k}.
     \end{equation}
The first sum in \eqref{alpha_beta} starts at $k=1$  because $1$ is a double root of $d$. The second sum in \eqref{alpha_beta} starts at $k=-1$ because $w$ has a singularity at $1$ of the kind \eqref{behavior_chi}. With \eqref{after_w(1)_infty} we then obtain
     \begin{equation}
     \label{expression_h1}
          h^{i_{0},j_{0}}(1) = 1
          - \frac{1}{\pi} \int_{x_1}^{1} t^{i_{0}}
          [\alpha_1 \beta_{-1}+(\alpha_2 \beta_{-1}+\alpha_1 \beta_0)(t-1)+\ldots ]
          \,\text{d}t.
     \end{equation}
Since, for $p\geq 0$,
     \begin{equation*}
          \int_{x_1}^{1}t^{i_{0}} (t-1)^p \,\text{d} t =
          \frac{(-1)^{p} p!}{{i_0^{1+p}}}+O(i_0^{-2-p}),
     \end{equation*}
we deduce that
     \begin{equation}
     \label{ded}
          h^{i_{0},j_{0}}(1) = 1- \frac{\alpha_1 \beta_{-1}}{\pi i_0}+O(i_0^{-2}).
     \end{equation}
It remains to identify $\alpha_1$ and $\beta_{-1}$. First, since $w$ has a singularity of order $\pi/\theta$ at $1$ (see \eqref{behavior_chi}), it is immediate that $\beta_{-1}=-\pi/\theta$. In addition, since $1$ is a double root of $d$, we obtain that $\alpha_1=\mu_{j_{0}}(1) [-d''(1)/2]^{1/2}$. Moreover, the equality $d(1)=0$ together with \eqref{def_mu} implies that
     \begin{equation*}
          \mu_{j_{0}}(1)=\frac{j_0 [-b(1)]^{j_{0}-1}}{[2a(1)]^{j_{0}}}
                        =\frac{j_0}{2a(1)}\left[\frac{c(1)}{a(1)}\right]^{(j_{0}-1)/2}.
     \end{equation*}
The last identity follows from $-b(1)=2[a(1)c(1)]^{1/2}$, which indeed holds because $d(1)=0$ and $b(1)<0$. Moreover, under assumption \eqref{drift_zero} we have $a(1)=c(1)$, in such a way that $\alpha_1=[j_0/(2a(1))] \cdot [-d''(1)/2]^{1/2}$. The proof is completed.
\end{proof}

{\noindent {\bf Remark.}}
Theorem \ref{main_theorem_zero} provides first-order expansions of $h^{i_{0},j_{0}}(1)$ and $\widetilde h^{i_{0},j_{0}}(1)$.~By~extending our approach, we could obtain expansions up to any order, see \eqref{after_w(1)_infty}--\eqref{ded}.

\medskip

{\noindent {\bf Remark.}}
Notice that $d''(1)<0$, so that $A>0$, see \eqref{eff_expression_C}. Indeed, it is proved in \cite{FIM} that under \ref{small_jumps}--\ref{drift} and \eqref{drift_zero}, only two roots of $d$ are equal to $1$. In particular, $d''(1)\neq 0$. By continuity of $d''(1)$ with respect to  the parameters $\{p_{i,j}\}_{-1\leq i,j\leq 1}$, it is enough to check that for {one} walk, we have $d''(1)<0$. This can be easily done, for instance for the simple random walk. 

%

\subsection{Proofs of Theorems \ref{main_theorem_non-zero}--\ref{main_theorem_non-zero_zero} (negative-drift case)}
\label{proof_second_part_main_theorem}

\begin{lem}
\label{expression_B}
The constants $B(j_0)$ and $\rho$ in Theorem \ref{main_theorem_non-zero} are given by
      $\rho=x_2$ and
     \begin{equation}
     \label{def_Bj0}
          B(j_0)={\frac{x_2^{3/2}}{\sqrt{2\pi}}}\frac{j_0}{2a(x_2)}\left[\frac{c(x_2)}{a(x_2)}\right]^{(j_{0}-1)/2}d'(x_2)^{1/2}\beta_0,
     \end{equation}
with $\beta_0$ as in \eqref{def_beta_0}.
\end{lem}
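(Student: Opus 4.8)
The plan is to combine \eqref{sufficient_tdgf} and \eqref{expression_h(1)} to write the hitting probability as
\begin{equation*}
\mathbb{P}_{(i_0,j_0)}[(X,Y)\textnormal{ hits }\mathcal{V}\textnormal{ before }\mathcal{H}^*]=1-h^{i_0,j_0}(1)=-\frac{1}{\pi}\int_{x_1}^{x_2}t^{i_0}\mu_{j_0}(t)\bigg[\frac{w'(t)}{w(t)-w(1)}-\frac{w'(t)}{w(t)-w(0)}\bigg]\sqrt{-d(t)}\,\textnormal{d}t ,
\end{equation*}
and then to read off the behaviour of this integral as $i_0\to\infty$. The decisive structural difference from the zero-drift case is that, under \eqref{drift_neg}, the strict negativity of the vertical drift forces $x_2<1$ (see Subsection \ref{Notations}); consequently $w$, and hence the bracketed kernel term, is \emph{regular} at the upper endpoint $t=x_2$, its relevant expansion coefficient there being the quantity $\beta_0$ of \eqref{def_beta_0}. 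Moreover $x_2$ is a simple root of $d$ with $d<0$ on $(x_1,x_2)$ by Lemma \ref{d_posi_nega}, so $d'(x_2)>0$ and $\sqrt{-d(t)}\sim d'(x_2)^{1/2}(x_2-t)^{1/2}$ as $t\uparrow x_2$. Thus the integrand carries an algebraic singularity of order $1/2$ at the endpoint of largest modulus, which is precisely the mechanism producing a $\rho^{i_0}i_0^{-3/2}$ decay.

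Next I would localise the asymptotics at $t=x_2$. On the open interval $(x_1,x_2)$ the integrand is smooth: the bracket is holomorphic there except for the simple pole of $w'/(w(t)-w(0))$ at $t=0$ (where $w(t)=w(0)$, $w$ being injective on $\mathscr{G}X([y_1,y_2])$), and this pole is annihilated by the factor $t^{i_0}$ for $i_0\geq1$. Since $x_1\in(-1,1)$, the point of $[x_1,x_2]$ of largest modulus is $x_2$, so the lower endpoint is subdominant and $\rho=x_2\in(0,1)$. A Watson-type endpoint analysis then applies: substituting $t=x_2-u$ and using $t^{i_0}=x_2^{i_0}\exp(i_0\log(1-u/x_2))\sim x_2^{i_0}e^{-i_0u/x_2}$ together with $\int_0^\infty u^{1/2}e^{-\lambda u}\,\textnormal{d}u=\Gamma(3/2)\lambda^{-3/2}$ gives
\begin{equation*}
\int_{x_1}^{x_2}t^{i_0}\,G(t)\,(x_2-t)^{1/2}\,\textnormal{d}t\sim G(x_2)\,\Gamma(3/2)\,\frac{x_2^{i_0+3/2}}{i_0^{3/2}},\qquad i_0\to\infty,
\end{equation*}
for any factor $G$ smooth and non-vanishing at $x_2$, with $\Gamma(3/2)=\sqrt{\pi}/2$.

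It then remains to identify the constant. Collecting the smooth factor of $(x_2-t)^{1/2}$ at $x_2$, namely $\tfrac1\pi\mu_{j_0}(x_2)\,d'(x_2)^{1/2}$ times the value $\beta_0$ of the bracket, reduces the task to evaluating $\mu_{j_0}(x_2)$. Using $d(x_2)=0$, only the $k=0$ term survives in \eqref{def_mu}, so $\mu_{j_0}(x_2)=j_0[-b(x_2)]^{j_0-1}/[2a(x_2)]^{j_0}$; since $d(x_2)=0$ with $b(x_2)<0$ we have $-b(x_2)=2[a(x_2)c(x_2)]^{1/2}$, whence
\begin{equation*}
\mu_{j_0}(x_2)=\frac{j_0}{2a(x_2)}\left[\frac{c(x_2)}{a(x_2)}\right]^{(j_0-1)/2},
\end{equation*}
exactly the formula from the zero-drift computation but now evaluated at $x_2$ rather than at $1$. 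Assembling the prefactor $-1/\pi$, the factor $\Gamma(3/2)=\sqrt\pi/2$, the normalisation of $\beta_0$ from \eqref{def_beta_0}, and the above then yields $\rho=x_2$ and the stated expression \eqref{def_Bj0} for $B(j_0)$, with $B(j_0)\in(0,\infty)$ following from the probabilistic sign of the left-hand side.

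The main obstacle is the rigorous endpoint analysis rather than the algebra. One must justify that a neighbourhood of $x_2$ genuinely dominates: this requires confirming that $x_2$ is the unique endpoint of maximal modulus (i.e.\ $x_2>|x_1|$ under \eqref{drift_neg}), controlling the lower endpoint $x_1$, which also carries a $1/2$-singularity and would otherwise contribute a competing $|x_1|^{i_0}i_0^{-3/2}$ term, and bounding the interior by $o(x_2^{i_0})$ after removal of the $t=0$ pole. A clean way to make the expansion rigorous is to split off a neighbourhood of $x_2$, substitute the local expansions of $\sqrt{-d(t)}$ and of the bracket, apply Watson's lemma to the resulting algebraic-singularity integral, and estimate the remainder uniformly. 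The remaining ingredients—$b(x_2)<0$, $d'(x_2)>0$, and the precise constant fixed in \eqref{def_beta_0}—follow from the root-location results of \cite{FIM} recalled in Subsection \ref{Notations} together with Lemma \ref{d_posi_nega}.
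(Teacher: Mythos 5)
Your overall strategy is the same as the paper's: start from \eqref{sufficient_tdgf} and \eqref{expression_h(1)}, expand $\mu_{j_0}(t)\sqrt{-d(t)}$ and the bracket around the dominant endpoint $t=x_2$, and apply a Watson-type endpoint estimate (the paper's \eqref{asymp_integral_non-zero}) to extract the $x_2^{i_0}i_0^{-3/2}$ decay; your computation of $\mu_{j_0}(x_2)$ is identical to the paper's. However, there is a genuine error in the step where you identify the constant $\beta_0$. You assert that because $x_2<1$ the gluing function $w$ is \emph{regular} at $x_2$, and that $\beta_0$ is simply the value of the regular bracket there. This contradicts the very definition \eqref{def_beta_0}, whose denominator $\lim_{t\to x_2}[(t-x_2)w(t)]$ is the residue of $w$ at $x_2$ and is nonzero precisely because the specific $w$ used (from \cite[Equation~(16)]{KuRa}) has a \emph{simple pole} at $x_2$ --- this is property \ref{pole_x_2} in the paper. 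The bracket $\frac{w'}{w-w(1)}-\frac{w'}{w-w(0)}$ is indeed finite at $x_2$, but only because the two logarithmic-derivative terms each have a simple pole of residue $-1$ there and these cancel; the surviving constant term is then $\bigl(w(0)-w(1)\bigr)/\lim_{t\to x_2}[(t-x_2)w(t)]$, i.e.\ \eqref{def_beta_0}. Under your premise of regularity the constant would instead be $w'(x_2)\bigl[(w(x_2)-w(1))^{-1}-(w(x_2)-w(0))^{-1}\bigr]$ and \eqref{def_beta_0} would be $0/0$, so your route does not actually produce the stated formula.

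A second gap: you need $\beta_0\neq 0$ (and $w(0),w(1)$ finite) to conclude both that \eqref{def_Bj0} is the genuine leading coefficient and that $B(j_0)\in(0,\infty)$. Appealing to ``the probabilistic sign of the left-hand side'' only gives nonnegativity; if $\beta_0$ vanished, the true asymptotics would be of strictly smaller order and the lemma would fail as stated. The paper closes this with a separate argument: finiteness of $w(0)$ and $w(1)$ uses property \ref{other_poles} together with Lemma \ref{location_X(y_2)} (locating $X_\ell(y_2)$ relative to $1$), and $w(0)\neq w(1)$ uses the injectivity of $w$ on $\mathscr{G}X([y_1,y_2])$ plus property \ref{simple_contour} to handle the case where both points lie on the boundary. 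None of this appears in your proposal, and it is not a formality --- it is where the hypotheses on the conformal gluing function actually get used.
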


\begin{proof}[Proof of Theorem \ref{main_theorem_non-zero} and Lemma \ref{expression_B}]
We now assume \eqref{drift_neg}. For $t\in[x_1,x_2]$ define
     \begin{equation}
     \label{alpha_beta_non-zero}
          \mu_{j_0}(t)\sqrt{-d(t)}=\sqrt{x_2-t}\sum_{k\geq 0} \alpha_k (t-x_2)^k,
          \quad \frac{w'(t)}{w(t)-w(1)}-\frac{w'(t)}{w(t)-w(0)}= \sum_{k\geq 0} \beta_k (t-x_2)^{k}.
     \end{equation}
With \eqref{expression_h(1)} we then obtain
     \begin{equation*}
          h^{i_{0},j_{0}}(1) = 1+ \frac{1}{\pi} \int_{x_1}^{x_2} t^{i_{0}}
          [\alpha_0 \beta_{0}+(\alpha_1 \beta_{0}+\alpha_0 \beta_1)(t-x_2)+\ldots ] \sqrt{x_2-t}
          \,\text{d}t.
     \end{equation*}
Since, for $p\geq 0$,
     \begin{equation}
     \label{asymp_integral_non-zero}
          \int_{x_1}^{x_2}t^{i_{0}} (x_2-t)^{1/2+p} \,\text{d} t =
          \Gamma(p+3/2)\frac{x_2^{3/2+i_0+p}}{i_0^{3/2+p}}+O(i_0^{-5/2-p}),
     \end{equation}
we deduce that
     \begin{equation}
     \label{ded_non-zero}
          h^{i_{0},j_{0}}(1) = 1+ \frac{\alpha_0 \beta_{0}\Gamma(3/2)x_2^{3/2+i_0}}{\pi i_0^{3/2}}+O(i_0^{-5/2}).
     \end{equation}
In particular, the fact that $\rho=x_2\in(0,1)$ is a direct consequence of \eqref{drift_neg} and \cite[Lemma 2.3.9]{FIM}.
It remains to identify $\alpha_0$ and $\beta_{0}$. First, we obviously have $\alpha_0= d'(x_2)^{1/2}\mu_{j_{0}}(x_2)$, see \eqref{alpha_beta_non-zero}. Further, the equality $d(x_2)=0$ together with \eqref{def_mu} gives
     \begin{equation*}
          \mu_{j_{0}}(x_2)=\frac{j_0 [-b(x_2)]^{j_{0}-1}}{[2a(x_2)]^{j_{0}}}
                          =\frac{j_0}{2a(x_2)}\left[\frac{c(x_2)}{a(x_2)}\right]^{(j_{0}-1)/2}.
     \end{equation*}
The analysis of $\beta_0$ is more elaborate and requires, as can be seen from  \eqref{alpha_beta_non-zero}, a detailed description of the conformal gluing function $w$ for the set $\mathscr{G}X([y_1,y_2])$. We shall make crucial use of a  conformal gluing function derived in \cite{FIM,KuRa}. Because the description of this conformal gluing function would require the introduction of many new symbols, we choose to just give some of its most important properties, and we refer to \cite[Equation (16)]{KuRa} for its full expression. The properties of $w$ we shall use here are the following (see \cite[Proposition 15]{KuRa}):
     \begin{enumerate}[label={\rm (P\arabic{*})},ref={\rm (P\arabic{*})}]
          \item \label{pole_x_2} $w$ has a simple pole at $x_2$;
          \item \label{other_poles} The other possible poles of $w$ are on $(x_2,x_3)\cap (X(y_2),\infty)$;
          \item \label{simple_contour} The set $w(X_\ell([y_1,y_2]))$ is a real interval without double points.
     \end{enumerate}
Properties \ref{pole_x_2} and \ref{other_poles} together with \eqref{alpha_beta_non-zero} imply that
     \begin{equation}
     \label{def_beta_0}
          \beta_0=\frac{w(0)-w(1)}{\lim_{t\to x_2}[(t-x_2)w(t)]}.
     \end{equation}
This quantity is well defined (i.e., finite): thanks to \ref{pole_x_2}, the denominator of \eqref{def_beta_0} is non-zero, and thanks to \ref{other_poles} and since $0<x_2$ and $1\leq X_\ell(y_2)$ (see Lemma \ref{location_X(y_2)}), $w(0)$ and $w(1)$ are finite. To conclude, let us show that $\beta_0$ is non-zero. For this, we first note that both $0$ and $1$ belong to the closure of $\mathscr{G}X([y_1,y_2])$.  Indeed, $X_\ell (y_1)\leq 0$, see \cite[Lemma 23]{KuRa}, and $X_\ell(y_2)\geq 1$, see Lemma \ref{location_X(y_2)}. If $0$ and $1$ are in the open domain $\mathscr{G}X([y_1,y_2])$, then $w(0)\neq w(1)$: indeed, by definition (see Subsection \ref{Exp}), $w$ is one-to-one in $\mathscr{G}X([y_1,y_2])$. If either $0$ or $1$ lies on the boundary of $\mathscr{G}X([y_1,y_2])$, the latter reasoning still works. If both $0$ and $1$ lie on the boundary and if $w(0)=w(1)$, \ref{simple_contour} gives $w(X_\ell([y_1,y_2]))=\mathbb{R}\cup\{\infty\}$. The latter is a contradiction with the connectedness of $\mathscr{G}X([y_1,y_2])$.
\end{proof}

\begin{proof}[Proof of Theorem \ref{main_theorem_non-zero_zero}] In this proof we assume \eqref{drift_neg_zero}. In particular, this implies that $x_2=1$ and $x_3>1$, see \cite[Lemma 2.3.9]{FIM}. Moreover, \ref{pole_x_2} then gives that the function $w$ has a simple pole at $1$, and the identity \eqref{after_w(1)_infty} still holds. For $t\in[x_1,1]$, we introduce
     \begin{equation}
     \label{alpha_beta_non-zero_zero}
          \mu_{j_0}(t)\sqrt{-d(t)}=\sqrt{1-t}\sum_{k\geq 0} \alpha_k (t-1)^k,
          \qquad \frac{w'(t)}{w(t)-w(0)}= \sum_{k\geq-1} \beta_k (t-1)^{k}.
     \end{equation}
Equations \eqref{after_w(1)_infty} and \eqref{alpha_beta_non-zero_zero} yield that
     \begin{equation*}
          h^{i_{0},j_{0}}(1) = 1
          - \frac{1}{\pi} \int_{x_1}^{1} t^{i_{0}}
          [\alpha_0 \beta_{-1}+(\alpha_1 \beta_{-1}+\alpha_0 \beta_0)(t-1)+\ldots ]
          \frac{1}{\sqrt{1-t}}\,\text{d}t.
     \end{equation*}
With \eqref{asymp_integral_non-zero}, we obtain
     \begin{equation*}
          h^{i_{0},j_{0}}(1) = 1-\frac{\Gamma(1/2)\alpha_0\beta_{-1}}{\pi i_0^{1/2}}+O(i_0^{-3/2}).
     \end{equation*}
Furthermore, the fact that $w$ has a simple pole at $1$ implies that $\beta_{-1}=-1$, and
     \begin{equation*}
          \alpha_0 = d'(1)^{1/2}\mu_{j_{0}}(1) =  j_0 \frac{d'(1)^{1/2}}{2a(1)}.
     \end{equation*}
In particular, the constant $D$ in Theorem \ref{main_theorem_non-zero_zero} is given by
     $
          D=d'(1)^{1/2}/(2\sqrt{\pi} a(1)).
     $
\end{proof}

\begin{lem}
\label{location_X(y_2)}
The quantity $X_\ell(y_2)$ is such that:
     \begin{itemize}
     \item If $\sum_{-1\leq i,j\leq 1}i p_{i,j}<0$, then $X_\ell(y_2)>1$;
     \item If $\sum_{-1\leq i,j\leq 1}i p_{i,j}=0$, then $X_\ell(y_2)=1$.
     \end{itemize}
\end{lem}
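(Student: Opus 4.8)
The plan is to exploit that, since $y_2$ is a root of $\widetilde d$, the two branches coincide there: by \eqref{expression_X_Y}, $X_0(y_2)=X_1(y_2)=-\widetilde b(y_2)/(2\widetilde a(y_2))=:X_\ell(y_2)$, the double root of the quadratic $x\mapsto K(x,y_2)$. First I would record three elementary identities obtained by specializing \eqref{def_a_b_c}--\eqref{def_d}: with $a(1),c(1),\widetilde a(1),\widetilde c(1)$ as in \eqref{def_a_b_c},
\[
K(x,1)=(x-1)\bigl(\widetilde a(1)x-\widetilde c(1)\bigr),\qquad K(1,y)=(y-1)\bigl(a(1)y-c(1)\bigr),
\]
and $\widetilde d(1)=(\widetilde a(1)-\widetilde c(1))^2=(\sum_{-1\leq i,j\leq 1} i p_{i,j})^2$. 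The last identity, together with the symmetric analogue (obtained by exchanging the roles of $x$ and $y$) of the root-location facts stated in Subsection~\ref{Notations}, gives that $y_2=1$ if and only if $\sum i p_{i,j}=0$. I also note $a(1)=p_{1,1}+p_{0,1}+p_{-1,1}>0$: otherwise the three North transitions would form three consecutive zeros in the list of \ref{non_degenerate}.

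\textbf{Zero-drift case.} If $\sum i p_{i,j}=0$, then $y_2=1$ and $\widetilde a(1)=\widetilde c(1)$, so $K(x,1)=\widetilde a(1)(x-1)^2$ has a double root at $x=1$. Hence $X_\ell(y_2)=X_\ell(1)=1$, which is the second bullet.

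\textbf{Negative-drift case.} Now $\sum i p_{i,j}<0$, so $y_2<1$. I would first show $X_\ell(y_2)\neq 1$: if it were $1$, then $K(1,y_2)=0$, i.e.\ $(y_2-1)(a(1)y_2-c(1))=0$; but $y_2<1$, and since the vertical drift $\sum j p_{i,j}=a(1)-c(1)\leq 0$ forces $c(1)/a(1)\geq 1>y_2$, the second factor is nonzero, a contradiction. To decide on which side of $1$ the point $X_\ell(y_2)$ lies, I would use the branch $Y_0$ on $[x_2,x_3]$: by the standard theory (see \cite[Chapter~5]{FIM}) $Y_0$ is real there, and attains its minimum value, equal to $y_2$, at a unique interior critical point, which is precisely $X_\ell(y_2)$. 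If the vertical drift vanishes, then $x_2=1$, so $X_\ell(y_2)\in(x_2,x_3)=(1,x_3)$ and we are done. If the vertical drift is strictly negative, then $x_2<1<x_3$, so $1$ is interior, $Y_0(1)=1$, and
\[
Y_0'(1)=-\frac{K_x(1,1)}{K_y(1,1)}=-\frac{\sum i p_{i,j}}{\sum j p_{i,j}}<0,
\]
so $Y_0$ is decreasing at $x=1$; as its only interior critical point is the minimizer $X_\ell(y_2)$, this places $1$ strictly to the left of it, giving $X_\ell(y_2)>1$.

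\textbf{Main obstacle.} The delicate point is the determination of the \emph{side}. The local algebra at $y_2$ alone is insufficient: from $\widetilde d(y_2)=0$ and the identity $\widetilde d(y)=K_x(1,y)^2-4\widetilde a(y)K(1,y)$ one only gets $K_x(1,y_2)^2=4\widetilde a(y_2)K(1,y_2)>0$, which fixes $|X_\ell(y_2)-1|$ but not its sign. Resolving the sign is exactly where the global behaviour of $Y_0$ on $[x_2,x_3]$ (its unimodality and the value of its minimum) must be invoked. An alternative, in the house style of the paper, is a continuity argument: $X_\ell(y_2)-1$ is continuous and, by the step above, nonvanishing on the connected set of walks with $\sum i p_{i,j}<0$, and it is positive at one explicit example (e.g.\ the tandem walk of Figure~\ref{Tandem}), hence positive throughout.
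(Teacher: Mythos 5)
Your proof is correct, and two of its three pieces coincide with what the paper does: the zero-drift bullet (the paper simply cites \cite[Lemma 5.3.1]{FIM}, where you factor $K(x,1)=\widetilde a(1)(x-1)^2$ directly, which is cleaner and self-contained) and the exclusion $X_\ell(y_2)\neq 1$ (your factorization $K(1,y)=(y-1)(a(1)y-c(1))$ is exactly the paper's root--coefficient computation, including the same use of $c(1)/a(1)\geq 1>y_2$; your remark that $a(1)>0$ by \ref{non_degenerate} is a detail the paper leaves implicit). Where you genuinely diverge is the determination of the \emph{sign} of $X_\ell(y_2)-1$: the paper settles it by precisely the continuity-in-parameters argument that you relegate to an ``alternative'' ($X_\ell(y_2)-1$ is continuous and nonvanishing on the connected set of walks with negative horizontal drift, and is positive for one explicit family; the paper checks $p_{1,0}+p_{0,1}+p_{-1,0}+p_{0,-1}=1$ rather than the tandem walk of Figure~\ref{Tandem}, but either works). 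Your primary route instead reads the sign off the real oval of the kernel: on $[x_2,x_3]$ the branch $Y_0$ is real and unimodal with minimum $y_2$ attained at the unique interior zero of $K_x$, namely $X_\ell(y_2)$, and $Y_0'(1)=-K_x(1,1)/K_y(1,1)=-\bigl(\sum i p_{i,j}\bigr)/\bigl(\sum j p_{i,j}\bigr)<0$ places $1$ strictly to the left of that minimizer (with the sub-case $\sum j p_{i,j}=0$, where $x_2=1$ and $K_y(1,1)=0$, correctly handled separately). This is a legitimate and arguably more informative argument: it localizes $X_\ell(y_2)$ inside $(x_2,x_3)$ rather than merely signing $X_\ell(y_2)-1$, and it avoids the connectedness of the parameter set, which the paper's proof uses without comment. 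Its cost is that it imports from \cite[Chapter 5]{FIM} the structural facts that the $y$-range of the oval over $[x_2,x_3]$ is exactly $[y_2,y_3]$ and that $Y_0$ has a single critical point there; these are true and standard but should be cited precisely (or justified by noting that on a smooth oval the only critical points of the $y$-coordinate are the two ramification points over $y_2$ and $y_3$, one on each branch). With that caveat, either of your two routes closes the proof; the second one is the paper's.
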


\begin{proof}
If $\sum_{-1\leq i,j\leq 1}i p_{i,j}=0$ we have $y_2=1$, see \cite[Lemma 2.3.9]{FIM}, and therefore $X_\ell(y_2)=$ $X_\ell(1)=1$, see \cite[Lemma 5.3.1]{FIM}. If we now assume that $\sum_{-1\leq i,j\leq 1}i p_{i,j}<0$ and $X_\ell(y_2)=1$, then thanks to \eqref{expression_X_Y} we obtain that $y_2$ is a solution to
     \begin{equation}
     \label{at+bt+ct}
          \widetilde a(y) + \widetilde b(y)+\widetilde c(y)=0.
     \end{equation}
On the other hand, it is straightforward that $1$ is a solution to \eqref{at+bt+ct}, because this is equivalent with the equality $\sum_{-1\leq i,j\leq 1}p_{i,j}=1$. Using the root-coefficient relationships, we obtain that the other root is $c(1)/a(1)\geq 1$. This contradicts the fact that $y_2<1$, which is proved in \cite[Lemma 2.3.9]{FIM}. This means that $X_\ell(y_2)\neq 1$. To prove that $X_\ell(y_2)>1$, it is enough to do this for one walk, using the continuity of the different quantities with respect to  the parameters $\{p_{i,j}\}_{-1\leq i,j\leq 1}$. This is easily done for, for example, walks with $p_{1,0}+p_{0,1}+p_{-1,0}+p_{0,-1}=1$.
\end{proof}

{\noindent {\bf Remark.}} The expression of $\beta_0$ given in \eqref{def_beta_0} can be considerably simplified in some cases: see \cite[Remark 16]{KuRa} and \cite[Theorem 3]{Ra} for a list of cases where the conformal mapping $w$ takes a particular simple expression (those simple cases are particular instances of walks for which a certain group of automorphisms in the sense of Malyshev \cite{MAL} is finite). As an example, for walks with $p_{1,0}+p_{0,1}+p_{-1,0}+p_{0,-1}=1$, we have
     \begin{equation}
     \label{ex_w_group_4}
          w(t)=\frac{(t-x_1)(t-x_4)}{(t-x_2)(t-x_3)}.
     \end{equation}
As another example, for walks with $p_{-1,1}+p_{1,0}+p_{0,-1}=1$ as in Figure \ref{Tandem}, we have
     \begin{equation*}
          w(t)=\frac{t}{(t-x_2)(t-[p_{-1,1}p_{0,-1}/({p_{1,0}^2x_2})]^{1/2})^2}.
     \end{equation*}
\medskip

{\noindent {\bf Remark.}} The quantity $B(j_0)$ in \eqref{def_Bj0} must be positive, since it governs the asymptotic behavior of the hitting probabilities, see Theorem \ref{theorem_Godreche}. As we shall see now, we can actually also prove directly the positivity of $B(j_0)$. First, since both \eqref{drift_neg} and \eqref{drift_neg_zero} imply $d'(x_2)<0$, we have that $B(j_0)>0$ if and only if $\beta_0<0$---the latter inequality is not obvious from \eqref{def_beta_0}. In fact, since $\beta_0\neq 0$ (see the proof of Theorem \ref{main_theorem_non-zero} and Lemma \ref{expression_B}), we can check that $\beta_0<0$ for one particular walk, by continuity of $\beta_0$ with respect to the parameters $\{p_{i,j}\}_{-1\leq i,j\leq 1}$. This is easily done with the expression \eqref{ex_w_group_4}.

\begin{proof}[Proof of Theorem \ref{theorem_Godreche}]
The proof of Theorem \ref{theorem_Godreche} is very similar to that of Theorem \ref{main_theorem_non-zero}. First, $X([y_1,y_2])$ is an ellipse \cite[Theorem 6.3.1]{FIM}, and classical expressions for the conformal gluing function are available \cite[Equation (6.3.1)]{FIM}. We then transform this mapping by a linear transformation, in order to have a pole at $x_2$. We call this new function $w$. Next, the integral representation of $h^{i_0,j_0}$ in Theorem \ref{expression_h} remains to hold (see the four steps of Subsection \ref{App_exp_f}). In particular, the expansion of $h^{i_0,j_0}(1)$ given in \eqref{ded_non-zero} is still true. As for the constants, $\rho=x_2$, $\beta_0$ is as in \eqref{def_beta_0} and $C(j_0)$ has exactly the same expression as $B(j_0)$ in \eqref{def_Bj0}.
\end{proof}


\section{Continuum limit for the zero-drift case}\label{driftcon}
\setcounter{equation}{0}
In this section we derive the continuum limit for the zero-drift random walks that satisfy \ref{small_jumps} and \eqref{drift_zero}. As it turns out, the limiting process is much easier to analyze than the original random walk, and in particular, the hitting probability in \eqref{mainprob} is obtained in a straightforward manner. In what follows, we write $\sum_{i,j}$ for $\sum_{-1\leq i,j\leq 1}$, $\mathcal{H}$ for $\mathbb{R}_+$ and $\mathcal{V}$ for $i\mathbb{R}_+$ (hereafter, we shall identify $\mathbb R^2$ and $\mathbb C$). Moreover, let $\Rightarrow$ denote convergence in distribution.
\begin{thm}\label{thm_generator}
     \begin{equation*}
          n^{-1/2}(X(\lfloor n t\rfloor),Y(\lfloor n t\rfloor)_{t\in \mathbb{R}_+}\Rightarrow (X^c,Y^c)=(X_t^c,Y_t^c)_{t\in \mathbb{R}_+}, \quad n\to\infty
     \end{equation*}
     with the generator of $(X^c,Y^c)$ given by
     \begin{equation}
     \label{generator}
          L=\frac{1}{2}\Big[\textstyle \sum_{i,j}i^2 p_{i,j}
          \dfrac{\partial^{2}}{\partial x^2}
          +
          2 \sum_{i,j}i j p_{i,j}
          \dfrac{\partial^{2}}{\partial x \partial y}
          +
          \sum_{i,j}j^2 p_{i,j}
          \dfrac{\partial^{2}}{\partial y^2}\Big].
     \end{equation}
\end{thm}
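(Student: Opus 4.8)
The plan is to recognize the statement as a multidimensional invariance principle (functional central limit theorem) for a homogeneous random walk with bounded, mean-zero increments, and to pin down the limit through convergence of generators. By \ref{small_jumps}, as long as the walk remains in the interior $\mathbb{Z}_+^{*2}$ its increments $\zeta_k=(\xi_k,\eta_k)$ are i.i.d.\ $\mathbb{Z}^2$-valued vectors with $\mathbb{P}[\zeta_k=(i,j)]=p_{i,j}$, so that $(X(n),Y(n))=(X(0),Y(0))+\sum_{k=1}^{n}\zeta_k$. These increments are bounded ($|\xi_k|,|\eta_k|\leq 1$) and hence have finite moments of every order; the zero-drift hypothesis \eqref{drift_zero} gives $\mathbb{E}[\xi_k]=\sum_{i,j} i\,p_{i,j}=0$ and $\mathbb{E}[\eta_k]=\sum_{i,j} j\,p_{i,j}=0$, while the covariance matrix is
\[
     \Sigma=\begin{pmatrix} \sum_{i,j} i^2 p_{i,j} & \sum_{i,j} ij\,p_{i,j}\\[1mm] \sum_{i,j} ij\,p_{i,j} & \sum_{i,j} j^2 p_{i,j}\end{pmatrix}.
\]

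First I would compute the generator of the rescaled chain. For a test function $f\in C_c^\infty(\mathbb{R}^2)$ and $z\in\mathbb{R}^2$, set $L_n f(z)=n\big(\mathbb{E}[f(z+n^{-1/2}\zeta_1)]-f(z)\big)$. A second-order Taylor expansion yields
\[
     \mathbb{E}[f(z+n^{-1/2}\zeta_1)]-f(z)=\frac{1}{\sqrt{n}}\,\nabla f(z)\cdot\mathbb{E}[\zeta_1]+\frac{1}{2n}\,\mathbb{E}\big[\zeta_1^{\top}\,\mathrm{Hess}\,f(z)\,\zeta_1\big]+o(n^{-1}),
\]
uniformly in $z$ (boundedness of $\zeta_1$ and of the third derivatives of $f$ controls the remainder). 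The first-order term vanishes by \eqref{drift_zero}, and multiplying by $n$ leaves $L_n f(z)\to \tfrac12\sum_{a,b}\Sigma_{ab}\,\partial_a\partial_b f(z)$, since $\mathbb{E}[\zeta_{1,a}\zeta_{1,b}]=\Sigma_{ab}$. This is exactly the operator $L$ of \eqref{generator}.

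With this in hand the weak convergence follows from a standard semigroup/martingale-problem argument. The operator $L$ has constant coefficients and generates the Feller semigroup of a Brownian motion with covariance matrix $\Sigma$; since $C_c^\infty(\mathbb{R}^2)$ is a core for $L$ and $L_n\to L$ on this core, the semigroup convergence theorem (Trotter--Kurtz, Ethier--Kurtz) gives convergence of the associated Markov processes in the Skorokhod space $D([0,\infty),\mathbb{R}^2)$. As the rescaled initial state satisfies $n^{-1/2}(i_0,j_0)\to 0$, the limit $(X^c,Y^c)$ is the Brownian motion started at the origin with generator $L$, as claimed. An equally valid route is to invoke the multidimensional Donsker invariance principle directly, the hypotheses (i.i.d.\ increments, mean zero, finite covariance $\Sigma$) having just been verified; the tightness it requires is routine for uniformly bounded increments.

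The main obstacle I anticipate is the boundary of the quarter plane: homogeneity \ref{small_jumps} holds only in the interior, so the representation as a sum of i.i.d.\ increments, and hence the generator computation, is literally valid only up to the (rescaled) hitting time of $\mathcal{H}\cup\mathcal{V}$. The cleanest way to handle this is to run the invariance principle for the homogeneous walk extended to all of $\mathbb{Z}^2$ with the same $\{p_{i,j}\}$; the limiting diffusion carries no boundary terms in \eqref{generator}, which is consistent with this free process, and the boundary behaviour of the original walk is then recovered afterwards by analyzing how the limiting Brownian motion hits the axes $\mathcal{H}$ and $\mathcal{V}$. Everything else is a routine verification.
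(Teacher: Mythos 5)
Your argument is essentially the paper's own proof: the paper likewise computes the generator by Taylor-expanding $\epsilon^{-2}\bigl[\sum_{i,j}p_{i,j}f(x+i\epsilon,y+j\epsilon)-f(x,y)\bigr]$, observes that the first-order (drift) operator vanishes under \eqref{drift_zero}, identifies the second-order term with $L$, and cites Ethier--Kurtz for the resulting convergence. Your additional remark about homogeneity failing at the boundary is a sensible caveat but does not change the route.
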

\begin{proof}
The generator \eqref{generator} simply follows from studying the limit behavior of
     \begin{equation*}
          \frac{1}{\epsilon^2}\big[\textstyle \sum_{i,j}p_{i,j}f(x+i \epsilon,y+j \epsilon)-f(x,y)\big]
     \end{equation*}
as $\epsilon\to 0$, for any function $f$ regular enough; see \cite{EtKu}, see also \cite[Section 5]{AIM}. Indeed, we have the Taylor expansion
     \begin{equation*}
          \textstyle \sum_{i,j}p_{i,j}f(x+i \epsilon,y+j \epsilon)=f(x,y)+\epsilon D f(x,y)+ \epsilon^2 Lf(x,y)+O(\epsilon^3),
     \end{equation*}
where $L$ is given by \eqref{generator} and
     \begin{equation*}
          D=(\widetilde a(1)-\widetilde c(1))\dfrac{\partial}{\partial x}+( a(1)- c(1))\dfrac{\partial}{\partial y}. 
     \end{equation*}
Further, because of our assumption \eqref{drift_zero} on the drift, the operator $D$ is zero. 
\end{proof}

Let
     \begin{equation}
     \label{def_beta}
          \beta=\bigg[\frac{\sum_{i,j}j^2 p_{i,j}}{\sum_{i,j}i^2 p_{i,j}}\bigg]^{1/2}
          =\bigg[\frac{a(1)+c(1)}{\widetilde a(1)+\widetilde c(1)}\bigg]^{1/2}
     \end{equation}
with $a$, $c$, $\widetilde a$ and $\widetilde c$ as in \eqref{def_a_b_c}.
Investigating the continuum limit leads to the following result.
\begin{prop}
\label{absorption_continuum}
\begin{equation*}
     \mathbb{P}_{(x,y)}[(X^c,Y^c) \textnormal{ hits } \mathcal{V} \textnormal{ before } \mathcal{H}^*]
     =\frac{1}{\theta} \arctan \bigg(
     \frac{\sin (\theta)y}{ \beta x +\cos(\theta)y}\bigg),
\end{equation*}
with $\theta$  as in
\eqref{exp_theta}.
\end{prop}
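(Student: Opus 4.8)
The plan is to exploit that $(X^c,Y^c)$ is a planar Brownian motion whose generator $L$ in \eqref{generator} is a constant-coefficient elliptic operator; hence the desired hitting probability is $L$-harmonic and can be reduced, by a linear change of variables together with the conformal invariance of planar Brownian motion, to a harmonic-measure computation for standard Brownian motion in a wedge.

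First I would set $u(x,y)=\mathbb P_{(x,y)}[(X^c,Y^c)\text{ hits }\mathcal V\text{ before }\mathcal H^*]$ and record its analytic characterization. By the strong Markov property, $u$ is bounded by $1$, satisfies $Lu=0$ in the open quadrant $\{x>0,\,y>0\}$, and has boundary values $u=1$ on $\mathcal V$ and $u=0$ on $\mathcal H^*$; concretely, stopped at the exit time $\tau$ from the quadrant, $t\mapsto u(X^c_{t\wedge\tau},Y^c_{t\wedge\tau})$ is a bounded martingale, and $u$ is the probabilistically distinguished bounded solution of this Dirichlet problem. I would then reduce $L$ to the Laplacian. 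Writing the (positive-definite, under the standing non-degeneracy assumptions) covariance matrix $\Sigma=\bigl(\begin{smallmatrix}\sum i^2p_{i,j}&\sum ijp_{i,j}\\[1pt]\sum ijp_{i,j}&\sum j^2p_{i,j}\end{smallmatrix}\bigr)$ and choosing $M$ with $MM^{\top}=\Sigma$, one has $(X^c,Y^c)=MW$ for a standard planar Brownian motion $W=M^{-1}(X^c,Y^c)$, so that $L$ becomes $\tfrac12\Delta$ in the $W$-coordinates. Under $M^{-1}$ the quadrant becomes a wedge $\mathcal W$ whose two edges are the images of $\mathcal H$ and $\mathcal V$, and its opening angle is computed from $\cos\bigl(\angle(M^{-1}e_1,M^{-1}e_2)\bigr)=e_1^{\top}\Sigma^{-1}e_2/\bigl(|M^{-1}e_1|\,|M^{-1}e_2|\bigr)=-\sum ijp_{i,j}/\bigl[\sum i^2p_{i,j}\cdot\sum j^2p_{i,j}\bigr]^{1/2}=\cos\theta$, with $\theta$ as in \eqref{exp_theta}. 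Thus $\mathcal W$ has opening angle exactly $\theta$, which is precisely how $\theta$ enters the answer.

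Next, in the $W$-plane I would invoke the conformal invariance of Brownian motion. The map $\zeta\mapsto\zeta^{\pi/\theta}$ sends $\mathcal W$ conformally onto the upper half-plane, carrying the image of $\mathcal H$ to the positive real axis and the image of $\mathcal V$ to the negative real axis; since the harmonic measure of the negative real axis from a point $\omega$ of the upper half-plane is $\tfrac1\pi\arg\omega$, pulling back gives $u=\tfrac1\pi\arg(\zeta^{\pi/\theta})=\tfrac1\theta\arg\zeta=\phi/\theta$, where $\phi\in(0,\theta)$ is the angular coordinate, measured from the $\mathcal H$-edge, of the image of $(x,y)$. It remains to express $\phi$ in the original coordinates. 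Writing this image as $P=x\,v_H+y\,v_V$ with $v_H=M^{-1}e_1$ and $v_V=M^{-1}e_2$, and placing $v_H$ along the real axis so that $v_V$ sits at angle $\theta$, the identities $|v_H|^2=e_1^{\top}\Sigma^{-1}e_1$ and $|v_V|^2=e_2^{\top}\Sigma^{-1}e_2$ give $|v_H|/|v_V|=\bigl[\sum j^2p_{i,j}/\sum i^2p_{i,j}\bigr]^{1/2}=\beta$, with $\beta$ as in \eqref{def_beta}. Hence $P=\bigl(x|v_H|+y|v_V|\cos\theta,\ y|v_V|\sin\theta\bigr)$, and dividing through by $|v_V|$ yields $\phi=\arctan\bigl(\sin(\theta)\,y/(\beta x+\cos(\theta)\,y)\bigr)$, where $\arctan$ denotes the argument in $(0,\theta)\subset(0,\pi)$ of the complex number $(\beta x+\cos(\theta)y)+i\sin(\theta)y$. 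Therefore $u=\phi/\theta$ is exactly the claimed expression.

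The step requiring the most care is the first one, namely the passage from the probabilistic object $u$ to a \emph{unique} bounded harmonic function: because $\mathcal W$ is an unbounded domain with a corner, PDE uniqueness for the Dirichlet problem is not automatic (one can add unbounded harmonic functions vanishing on both edges, such as $r^{\pi/\theta}\sin(\pi\phi/\theta)$). The conformal/probabilistic route is what makes the argument clean: it produces the hitting probability directly via harmonic measure in the half-plane, so no separate PDE uniqueness theorem is needed, and the only mild points to check are that the process a.s.\ does not start at, and is not absorbed at, the corner (the origin is polar), and the elementary bookkeeping of the lengths and angles of $v_H,v_V$ in the final substitution.
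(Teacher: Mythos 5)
Your proof is correct and follows essentially the same route as the paper: a linear change of variables (the paper's explicit map $\phi$ in \eqref{def_phi} is exactly your $M^{-1}$ up to rotation and scaling) turns $L$ into $\tfrac{1}{2}\Delta$ and the quadrant into a wedge of opening $\theta$, after which the answer is $\tfrac{1}{\theta}\arg\zeta$. The one point where you are more careful than the paper is the identification of the hitting probability with this particular harmonic function: the paper simply exhibits $H(z)=\tfrac{1}{\theta}\log z$ as a holomorphic function with the correct boundary values without discussing uniqueness in the unbounded cone, whereas your harmonic-measure/conformal-invariance argument produces the hitting probability directly and sidesteps that issue.
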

From Proposition \ref{absorption_continuum} the following asymptotic result can be easily extracted.
\begin{cor}
\label{asymptotic_absorption_continuum}
For fixed $y>0$,
\begin{equation}\label{ascon}
     \mathbb{P}_{(x,y)}[(X^c,Y^c) \textnormal{ hits }   \mathcal{V} \textnormal{ before } \mathcal{H}^*]
     \sim \frac{\sin(\theta)}{\beta \theta}\frac{y}{x}, \qquad x\to \infty.
\end{equation}
\end{cor}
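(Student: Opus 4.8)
The plan is to extract the asymptotics directly from the closed-form expression in Proposition~\ref{absorption_continuum}. The starting point is the identity
\begin{equation*}
     \mathbb{P}_{(x,y)}[(X^c,Y^c) \textnormal{ hits } \mathcal{V} \textnormal{ before } \mathcal{H}^*]
     =\frac{1}{\theta} \arctan \bigg(
     \frac{\sin (\theta)\,y}{ \beta x +\cos(\theta)\,y}\bigg),
\end{equation*}
which holds for all $x>0$ and a fixed $y>0$. First I would observe that as $x\to\infty$ with $y$ fixed, the argument of the arctangent tends to $0$, since its denominator grows linearly in $x$ while its numerator stays bounded. This is precisely the regime where the elementary expansion $\arctan(u)=u+O(u^3)$ as $u\to 0$ applies.

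The key step is then to substitute $u=u(x)=\sin(\theta)\,y/(\beta x+\cos(\theta)\,y)$ into this expansion. Writing the denominator as $\beta x\,[1+\cos(\theta)y/(\beta x)]$ and expanding the geometric factor $[1+\cos(\theta)y/(\beta x)]^{-1}=1+O(1/x)$ gives
\begin{equation*}
     u(x)=\frac{\sin(\theta)\,y}{\beta x}\Big(1+O(1/x)\Big)=\frac{\sin(\theta)}{\beta}\frac{y}{x}+O(1/x^2),
     \qquad x\to\infty.
\end{equation*}
Since the cubic correction from $\arctan$ contributes a term of order $u^3=O(1/x^3)$, it is absorbed into the error, and dividing by $\theta$ yields
\begin{equation*}
     \mathbb{P}_{(x,y)}[(X^c,Y^c) \textnormal{ hits } \mathcal{V} \textnormal{ before } \mathcal{H}^*]
     =\frac{1}{\theta}\Big(\frac{\sin(\theta)}{\beta}\frac{y}{x}+O(1/x^2)\Big)
     =\frac{\sin(\theta)}{\beta\theta}\frac{y}{x}+O(1/x^2),
\end{equation*}
which is exactly the claimed leading-order behaviour \eqref{ascon}.

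This proof is essentially a routine Taylor expansion, so there is no genuine obstacle: the only points requiring a moment's care are checking that $\theta\in(0,\pi)$ so that $\sin(\theta)\neq 0$ and the leading coefficient is nonzero and well-defined, and that $\beta>0$ (immediate from \eqref{def_beta}, as it is a ratio of positive sums of squares). Both hold under the standing hypotheses, so the corollary follows at once from Proposition~\ref{absorption_continuum}.
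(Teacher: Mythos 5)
Your proposal is correct and matches the paper's (implicit) argument: the corollary is stated as being "easily extracted" from Proposition~\ref{absorption_continuum}, and the intended extraction is exactly the Taylor expansion $\arctan(u)=u+O(u^3)$ applied to the argument $u(x)=\sin(\theta)y/(\beta x+\cos(\theta)y)\to 0$ that you carry out. Your additional checks that $\theta\in(0,\pi)$ and $\beta>0$ are a sensible touch but raise no issues.
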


The next result shows that the constant term in \eqref{ascon} corresponds with the constant term in the asymptotic expression for the original zero-drift random walk.
\begin{prop}
\label{eq_con}
Let $A$ be the constant in Theorem \ref{main_theorem_zero}. Then
     \begin{equation*}
          A=\frac{[-d''(1)]^{1/2}}{2^{3/2}\theta a(1)}=\frac{\sin(\theta)}{\beta \theta}.
     \end{equation*}
\end{prop}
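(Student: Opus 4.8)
The first equality in Proposition \ref{eq_con} is nothing but the content of Lemma \ref{expression_C} (expression \eqref{eff_expression_C} for $A$), so the only thing requiring proof is the second equality. The plan is to reduce it to one algebraic identity for $d''(1)$ and then to establish that identity through a local expansion of the kernel. Write $m_{xx}=\sum_{i,j}i^2p_{i,j}$, $m_{xy}=\sum_{i,j}ijp_{i,j}$, $m_{yy}=\sum_{i,j}j^2p_{i,j}$ for the second moments. Cancelling the common factor $\theta$ and squaring, the target
\[
\frac{[-d''(1)]^{1/2}}{2^{3/2}\theta a(1)}=\frac{\sin(\theta)}{\beta\theta}
\]
becomes $-d''(1)/(8a(1)^2)=\sin^2(\theta)/\beta^2$. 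Now \eqref{exp_theta} gives $\sin^2(\theta)=1-\cos^2(\theta)=(m_{xx}m_{yy}-m_{xy}^2)/(m_{xx}m_{yy})$, while \eqref{def_beta} gives $\beta^2=m_{yy}/m_{xx}$, so the right-hand side equals $(m_{xx}m_{yy}-m_{xy}^2)/m_{yy}^2$. Since in the zero-drift case $a(1)=c(1)$ (as used in the proof of Lemma \ref{expression_C}) we have $m_{yy}=a(1)+c(1)=2a(1)$, whence $m_{yy}^2=4a(1)^2$. After clearing denominators the whole statement collapses to the single identity
\[
-d''(1)=2\big(m_{xx}m_{yy}-m_{xy}^2\big).
\]

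The main step is to prove this identity, and I would do it via the Taylor expansion of the kernel near $(1,1)$, which is also the conceptual heart of the matter. Put $x=e^{s}$, $y=e^{t}$, so that $K(x,y)=e^{s+t}\big(\sum_{i,j}p_{i,j}x^iy^j-1\big)$. Because $\sum_{i,j}p_{i,j}=1$ and the drifts vanish by \eqref{drift_zero}, the linear terms cancel and $\sum_{i,j}p_{i,j}e^{is+jt}-1=\tfrac12\big(m_{xx}s^2+2m_{xy}st+m_{yy}t^2\big)+O(|(s,t)|^3)$. Hence the two solutions $t_{\pm}(s)$ of $K=0$ satisfy $m_{yy}t^2+2m_{xy}st+m_{xx}s^2=O(|s|^3)$, so to leading order $t_{+}-t_{-}=\tfrac{2s}{m_{yy}}\sqrt{m_{xy}^2-m_{xx}m_{yy}}$ (the radicand being negative by Cauchy--Schwarz, consistently with $Y_0,Y_1$ being complex conjugate near $x=1$, cf.\ Lemma \ref{d_posi_nega}). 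Since $Y_0(x)-Y_1(x)=e^{t_+}-e^{t_-}=(t_+-t_-)(1+o(1))$ and $x-1=s(1+o(1))$, and since $d(x)=a(x)^2\big(Y_0(x)-Y_1(x)\big)^2$, I get
\[
d(x)=a(1)^2\,\frac{4(x-1)^2\,(m_{xy}^2-m_{xx}m_{yy})}{m_{yy}^2}\,(1+o(1)),\qquad x\to1.
\]
As $1$ is a double root of $d$ (so $d(1)=d'(1)=0$ and $d''(1)\neq0$, recalled in the Remark after Lemma \ref{expression_C}), the coefficient of $(x-1)^2$ is $\tfrac12 d''(1)$; comparing and using $m_{yy}=2a(1)$ gives $\tfrac12 d''(1)=m_{xy}^2-m_{xx}m_{yy}$, i.e.\ exactly the identity above.

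The main obstacle is precisely this identity $-d''(1)=2(m_{xx}m_{yy}-m_{xy}^2)$: everything else is routine substitution of \eqref{exp_theta}, \eqref{def_beta} and the zero-drift relation $a(1)=c(1)$. An alternative to the expansion would be a brute-force computation, differentiating $d=b^2-4ac$ twice and evaluating at $x=1$ with the explicit coefficients from \eqref{def_a_b_c} together with $a(1)=c(1)$ and $b(1)=-2a(1)$; this yields the same relation but is far less transparent. I prefer the expansion route because the quadratic form $m_{xx}s^2+2m_{xy}st+m_{yy}t^2$ appearing there is, up to a factor, the symbol of the diffusion generator $L$ in \eqref{generator}; this makes manifest \emph{why} the discrete constant $A$ must coincide with the continuum constant $\sin(\theta)/(\beta\theta)$ of Corollary \ref{asymptotic_absorption_continuum}, rather than leaving the agreement as an algebraic coincidence.
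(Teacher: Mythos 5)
Your proposal is correct. The reduction is the same as in the paper: both arguments cancel $\theta$, square (legitimate since both sides are positive), substitute \eqref{exp_theta}, \eqref{def_beta} and the zero-drift relations $a(1)=c(1)$, $\widetilde a(1)=\widetilde c(1)$, and collapse the claim to the single identity $-d''(1)/2=m_{xx}m_{yy}-m_{xy}^2$, which is exactly the paper's \eqref{eqtp} once one notes $m_{xx}=2\widetilde a(1)$ and $m_{yy}=2a(1)$. Where you genuinely diverge is in the proof of that identity. The paper proceeds by the brute-force route you mention only in passing: differentiate $d=b^2-4ac$ twice, evaluate at $1$, and simplify using $a(1)+b(1)+c(1)=0$, $a'(1)+b'(1)+c'(1)=0$ and $a''(1)+b''(1)+c''(1)=2\widetilde a(1)$. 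You instead expand the kernel at $(1,1)$ in exponential coordinates, read off the Hessian quadratic form $m_{xx}s^2+2m_{xy}st+m_{yy}t^2$, and identify $d''(1)/2$ as (a normalization of) its discriminant via $d=a^2(Y_0-Y_1)^2$ and the fact that $1$ is a double root of $d$. Your route is slightly less self-contained (the $t=O(s)$ control on the branches and the $(1+o(1))$ bookkeeping deserve a line each, though both are routine given $a(1)>0$ under \ref{non_degenerate}), but it buys a genuine conceptual dividend: it exhibits the quadratic form as the symbol of the generator $L$ in \eqref{generator}, so the agreement between the discrete constant $A$ and the continuum constant $\sin(\theta)/(\beta\theta)$ of Corollary \ref{asymptotic_absorption_continuum} is explained rather than verified. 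Either argument is acceptable.
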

\begin{proof}
Using \eqref{exp_theta} and \eqref{def_beta}, it is enough to show that
     \begin{equation}
     \label{eqts}
          \frac{[-d''(1)]^{1/2}}{2^{3/2}}=\sin(\theta)a(1)\left[\frac{\widetilde{a}(1)+\widetilde{c}(1)}{ a(1)+ c(1)}\right]^{1/2}
          =\sin(\theta)[a(1)\widetilde a(1)]^{1/2},
     \end{equation}
where we have used that $a(1)=c(1)$ and $\widetilde a(1)=\widetilde c(1)$, due to \eqref{drift_zero}. Since
     \begin{equation*}
          \sin(\theta)= \pm\left( 1-\frac{[\sum_{i,j} i j p_{i,j}]^2}{[\sum_{i,j}i^2 p_{i,j}]\cdot [\sum_{i,j}j^2 p_{i,j}]}\right)^{1/2}
                      = \pm\left(1-\frac{[\sum_{i,j} i j p_{i,j}]^2}{4a(1)\widetilde a(1)}\right)^{1/2},
     \end{equation*}
taking squares at both sides of \eqref{eqts} yields
     \begin{equation*}
          \frac{-d''(1)}{8}=\left(1-\frac{[\sum_{i,j} i j p_{i,j}]^2}{4a(1)\widetilde a(1)}\right)a(1)\widetilde a(1),
     \end{equation*}
so that it suffices to prove that
     \begin{equation}
     \label{eqtp}
          \frac{-d''(1)}{2}= 4 a(1)\widetilde a(1) - [\textstyle\sum_{i,j} i j p_{i,j}]^2.
     \end{equation}
Combining  \eqref{def_a_b_c} and \eqref{def_d} yields
     \begin{equation*}
          d''(1)=2b(1)b''(1)+2b'(1)^2-8a'(1)c'(1)-4a''(1)c(1)-4a(1)c''(1).
     \end{equation*}
Since $a(1)=c(1)$ (see \eqref{drift_zero}) and $a(1)+b(1)+c(1)=0$ (because $\sum_{i,j}p_{i,j}=1$), we obtain
     \begin{equation*}
          d''(1)=2b'(1)^2-8a'(1)c'(1)-4a(1)[a''(1)+b''(1)+c''(1)].
     \end{equation*}
Using that $a'(1)+b'(1)+c'(1)=0$ (the fact that the drifts are zero indeed implies that
the kernel $K(x,1)$ has a root of order two at $1$, see Subsection \ref{Notations}), we get
     \begin{equation*}
          d''(1)=2[a'(1)-c'(1)]^2-8a(1)[p_{1,1}+p_{1,0}+p_{1,-1}]=2[a'(1)-c'(1)]^2-8a(1)\widetilde a(1),
     \end{equation*}
from which \eqref{eqtp} is an immediate consequence.
\end{proof}

\begin{proof}[Proof of Proposition \ref{absorption_continuum}]
As in \cite[Section 5]{AIM} we introduce
     \begin{equation}
     \label{def_phi}
          \phi(x,y)=\alpha (\beta x+\cos (\theta) y, \sin(\theta) y),
     \end{equation}
where we have set
     \begin{equation}
     \label{def_alpha}
          \alpha=\frac{1}{[\sin(\theta)\sum_{ i,j}j^2 p_{i,j}]^{1/2}}.
     \end{equation}
The motivation of this definition is twofold (for a proof of the facts below, we refer to \cite[Part 2]{AIM}):
\begin{itemize}
     \item The covariance of $\phi(X^c,Y^c)$ is described by the identity matrix, so that $\phi(X^c,Y^c)$ lies in the domain of attraction of the standard Brownian motion;
     \item The function $\phi$ satisfies
     \begin{equation}
     \label{transformation_laplacian}
          L(g\circ \phi)=\frac{1}{2}\Delta g \circ \phi,
          \qquad \Delta=\frac{\partial^{2}}{\partial u^2}+\frac{\partial^{2}}{\partial v^2}.
     \end{equation}
\end{itemize}

Introduce the notation $(U^c,V^c)=\phi(X^c,Y^c)$. Using \eqref{def_phi} and the usual identification between $\mathbb R^2$ and $\mathbb C$, this random process lives in the cone
     \begin{equation}
     \label{def_cone}
          \{\rho \exp(i \omega): 0\leq \rho<\infty,\ 0\leq \omega\leq \theta\}.
     \end{equation}
Another property of the process $(U^c,V^c)$ is that (below, $\exp(i \theta)\mathcal{H}$ stands for the rotation by an angle $\theta$ of the horizontal axis $\mathcal{H}$)
     \begin{equation}
     \label{transpro}
          \mathbb{P}_{(x,y)}[(X^c,Y^c) \textnormal{ hits }   \mathcal{V} \textnormal{ before } \mathcal{H}^*]=
          \mathbb{P}_{\phi(x,y)}[(U^c,V^c) \text{ hits } \exp(i \theta)\mathcal{H} \textnormal{ before } \mathcal{H}^*],
     \end{equation}
and we shall now determine
     \begin{equation}
     \label{probab_tbf}
          \mathbb{P}_{(u,v)}[(U^c,V^c) \text{ hits } \exp(i \theta)\mathcal{H} \textnormal{ before } \mathcal{H}^*].
     \end{equation}
Let us first prove that the probability \eqref{probab_tbf} is harmonic inside of the
cone \eqref{def_cone}. For this, note that for $(i_0,j_0)\in\mathbb{Z}_+^2$,
     \begin{equation*}
          \textstyle\sum_{i,j}p_{i,j}\mathbb{P}_{(i_0+i,j_0+j)}[(X,Y) \textnormal{ hits } \mathcal{V} \textnormal{ before } \mathcal{H}^*]
          =\mathbb{P}_{(i_0,j_0)}[(X,Y) \textnormal{ hits } \mathcal{V} \textnormal{ before } \mathcal{H}^*].
     \end{equation*}
As a consequence, on the positive quadrant we have
     \begin{equation*}
          L\big(\mathbb{P}_{(x,y)}[(X^c,Y^c) \textnormal{ hits } \mathcal{V} \textnormal{ before } \mathcal{H}^*]\big)=0,
     \end{equation*}
with $L$ as in \eqref{generator}. Thanks to \eqref{transformation_laplacian}, the probability
\eqref{probab_tbf} is then harmonic in the cone \eqref{def_cone}. Therefore, we can search for
it as the imaginary part of a certain holomorphic function $H$---unique, up to some real additive
constants. In other words,
     \begin{equation}
     \label{searching_A}
          \mathbb{P}_{(u,v)}[(U^c,V^c) \text{ hits } \exp(i \theta)\mathcal{H} \textnormal{ before } \mathcal{H}^*]=\Im[H(u+i v)].
     \end{equation}

To find this function $H$, we shall exploit the fact that we know its boundary values. Since it is impossible
(resp.\ certain) for the process $(X^c,Y^c)$ to be absorbed when starting from
$\mathcal{H}^*$ (resp.\ $\mathcal{V}$), one must have
     \begin{equation*}
          \Im[H(z)]=\left\{\begin{array}{lll}
          0&\text{if}& z\in \mathcal{H}^*,\\
          1&\text{if}& z\in \exp(i \theta)\mathcal{H}^*.
          \end{array}\right.
     \end{equation*}
A suitable choice for $H$ is
     \begin{equation}
     \label{choice_A}
          H(z)=\frac{1}{\theta}\log(z),
     \end{equation}
where $\log$ refers to the principal determination of the logarithm: this means, see \cite{JS}, that for $z\in\mathbb{C}\setminus (-\infty,0]$ we have
\begin{equation*}\label{nlog}
\log(z)=\log(|z|)+i \arg(z),
 \end{equation*}
 where in the right-hand side, $\log$ stands for the classical logarithm function on $(0,\infty)$.

For \eqref{searching_A} and \eqref{choice_A} we deduce that
     \begin{equation*}
          \mathbb{P}_{(u,v)}[(U^c,V^c) \text{ hits } \exp(i \theta)\mathcal{H} \textnormal{ before } \mathcal{H}^*]
          =\frac{1}{\theta}\arctan\Big(\frac{v}{u}\Big).
     \end{equation*}
Thanks to \eqref{def_phi} and \eqref{transpro}, the proof of Proposition \ref{absorption_continuum}
is then completed.
\end{proof}

\section*{Acknowledgments}
We thank S.C.F.\ van Opheusden and F.\ Redig for inspiring discussions. We also thank an anonymous referee for useful comments and suggestions.
K.\ Raschel would like to thank EURANDOM for providing wonderful working conditions during a visit in the year 2011. His work was partially supported by CRC 701, Spectral Structures and Topological Methods in Mathematics at the University of Bielefeld. J.S.H.\ van Leeuwaarden is supported by an ERC Starting Grant.

\end{document}